\let\origsection=\section \def\section{\@ifstar{\origsection*}{\mysection}}
\def\mysection{\@startsection{section}{1}\z@{.7\linespacing\@plus\linespacing}{.5\linespacing}{\normalfont\scshape\centering\S}}
\renewcommand{\PrintDOI}[1]{\doi{#1}}
\numberwithin{equation}{section}
\numberwithin{figure}{section}
\def\rmlabel{\upshape({\itshape \roman*\,})}
\let\polishlcross=\l
\def\l{\ifmmode\ell\else\polishlcross\fi}
\let\setminus=\smallsetminus
\let\sm=\setminus
\def\moverlay{\mathpalette\mov@rlay}
\def\mov@rlay#1#2{\leavevmode\vtop{   \baselineskip\z@skip \lineskiplimit-\maxdimen
   \ialign{\hfil$\m@th#1##$\hfil\cr#2\crcr}}}
\newcommand{\charfusion}[3][\mathord]{
    #1{\ifx#1\mathop\vphantom{#2}\fi
        \mathpalette\mov@rlay{#2\cr#3}
      }
    \ifx#1\mathop\expandafter\displaylimits\fi}
\newcommand{\dcup}{\charfusion[\mathbin]{\cup}{\cdot}}
\def\mfy@rlay#1{\leavevmode\vtop{   \baselineskip\z@skip \lineskiplimit-\maxdimen
   \ialign{\hfil$\m@th#1##$\hfil\cr\nrhup\crcr}}}
\newcommand{\vfusion}[1][\mathord]{\mathpalette\mfy@rlay{#1\cr}}
\DeclareFontFamily{U}  {MnSymbolC}{}
\DeclareSymbolFont{MnSyC}         {U}  {MnSymbolC}{m}{n}
\DeclareFontShape{U}{MnSymbolC}{m}{n}{
    <-6>  MnSymbolC5
   <6-7>  MnSymbolC6
   <7-8>  MnSymbolC7
   <8-9>  MnSymbolC8
   <9-10> MnSymbolC9
  <10-12> MnSymbolC10
  <12->   MnSymbolC12}{}
\DeclareMathSymbol{\powerset}{\mathord}{MnSyC}{180}
\let\epsilon=\varepsilon
\let\eps=\epsilon
\let\rho=\varrho
\let\theta=\vartheta
\let\phi=\varphi
\def\EE{{\mathds E}}
\def\NN{{\mathds N}}
\def\PP{{\mathds P}}
\newcommand{\cA}{\mathcal{A}}
\newcommand{\cH}{\mathcal{H}}
\theoremstyle{plain}
\newtheorem{thm}{Theorem}[section]
\newtheorem{clm}[thm]{Claim}
\newtheorem{cor}[thm]{Corollary}
\newtheorem{lemma}[thm]{Lemma}
\theoremstyle{definition}
\newtheorem{dfn}[thm]{Definition}
\newtheoremstyle{dotless}{}{}{\itshape}{}{\bfseries}{}{ }{}
\theoremstyle{dotless}
\newcommand{\vrhup}[1]{\scaleobj{0.6}{\scalerel*{\rightharpoonup}{#1}}}
\newcommand{\nrhup}{\mathord{\scaleobj{0.6}{\scalerel*{\rightharpoonup}{x}}}}
\newcommand{\wrhup}{\scaleobj{0.6}{\scalerel*{\rightharpoonup}{W}}}
\def\vseq#1{\ThisStyle{%
  \mathord{\vbox{\offinterlineskip\ialign{%
    \hfil##\hfil\cr
    $\SavedStyle{}_{\smash{\vrhup#1}}$\cr
    \noalign{\kern-0.7\scriptspace}
    $\SavedStyle#1$\cr}}}}}
\def\seq#1{\ThisStyle{%
  \mathord{\vbox{\offinterlineskip\ialign{%
    \hfil##\hfil\cr
    $\SavedStyle{}_{\smash{\nrhup}}$\cr
    \noalign{\kern-0.5\scriptspace}
    $\SavedStyle#1$\cr}}}}}
\def\wseq#1{\ThisStyle{%
  \mathord{\vbox{\offinterlineskip\ialign{%
    \hfil##\hfil\cr
    $\SavedStyle{}_{\smash{\wrhup#1}}$\cr
    \noalign{\kern-0.7\scriptspace}
    $\SavedStyle#1$\cr}}}}}
\let\vec=\seq
\DeclareMathOperator{\bw}{bw}
\newcommand{\zetaa}{\zeta_{\rm A}}
\newcommand{\rhoa}{\rho_{\rm A}}
\newcommand{\alphaa}{\alpha_{\rm A}}
\newcommand{\rhop}{\rho_{\rm P}}
\newcommand{\zetap}{\zeta_{\rm P}}
\newcommand{\rhoc}{\rho_{\rm C}}
\newcommand{\xic}{\xi_{\rm C}}
\newcommand{\mc}{M_{\rm C}}
\newcommand{\zetac}{\zeta_{\rm C}}
\let\to=\longrightarrow
\begin{document}

\title[Embedding spanning subgraphs in uniformly dense and inseparable graphs]{Embedding spanning subgraphs in uniformly dense and inseparable graphs}

\author[O. Ebsen]{Oliver Ebsen}
\author[G. S. Maesaka]{Giulia S. Maesaka}
\author[Chr. Reiher]{Christian Reiher}
\author[M. Schacht]{Mathias Schacht}
\author[B. Sch\"{u}lke]{Bjarne Sch\"{u}lke}
\address{Fachbereich Mathematik, Universit\"{a}t Hamburg, Hamburg, Germany}
\email{Oliver.Ebsen@uni-hamburg.de}
\email{Giulia.Satiko.Maesaka@uni-hamburg.de}
\email{Christian.Reiher@uni-hamburg.de}
\email{schacht@math.uni-hamburg.de}
\email{Bjarne.Schuelke@uni-hamburg.de}
\thanks{G.S.\,M.\ and M.\,S.\ are supported by the \emph{European Research Council} (Consolidator Grant PEPCo 724903).}
\thanks{B.\,S.\ is supported by the \emph{German-Israeli Foundation for Scientific Research and Development} (Collaboration Grant No.~I-1358-304.6/2016).}

\subjclass[2010]{05C35 (05C70)}
\keywords{Powers of Hamiltonian cycles, bandwidth theorem, absorption method}

\begin{abstract}
We consider sufficient conditions for the existence of $k$-th powers of Hamiltonian cycles in 
	$n$-vertex graphs $G$ with minimum degree $\mu n$ for arbitrarily small~$\mu>0$. About 20 years ago Koml\'os, Sark\"ozy, and Szemer\'edi
	resolved the conjectures of P\'osa and Seymour and obtained optimal minimum degree conditions for this problem  by showing that 
	$\mu=\frac{k}{k+1}$ suffices for large~$n$. For smaller values of $\mu$ the given graph $G$ must satisfy additional 
	assumptions. We show that inducing subgraphs of density $d>0$ on linear subsets of vertices and being inseparable, 
	in the sense that every cut has density at least $\mu>0$, are sufficient assumptions for this problem and, in fact, for a variant 
	of the bandwidth theorem. This generalises recent results of Staden and Treglown. 
\end{abstract}

\maketitle

\section{Introduction}
\label{sec:intro}
We study sufficient conditions for the existence of spanning subgraphs in large finite graphs and begin the discussion with 
powers of Hamiltonian cycles.  For $k\in\NN$ the 
	\emph{$k$-th power} of a given graph~$H$ is the graph~$H^{k}$ on the same vertex set with $xy$ being an edge in $H^k$ if $x$ and $y$ 
	are distinct vertices of~$H$ that are connected in~$H$ by a path of at most~$k$ edges. 
	For simplicity,  we refer to a $k$-th power of a path with at least $k$ vertices as a \emph{$k$-path}. 
	Moreover, we refer to the ordered $k$-tuples of the first and last $k$ vertices of a $k$-path as \emph{ends} of the $k$-path
	and an $(\seq x,\seq y;k)$-path is a $k$-path with ends $\seq x$ and $\seq y$.
	Note that every $k+1$ consecutive vertices of a $k$-path  span a clique and 
	if a graph~$G=(V,E)$ contains \mbox{the~$k$-th} power of a Hamiltonian cycle, it also contains~$\lfloor \tfrac{|V|}{k+1}\rfloor$ pairwise vertex disjoint 
	copies of~$K_{k+1}$ and $G$ contains a~$K_{k+1}$-factor if~$|V|$ is divisible by~$k+1$.
	
	Establishing sufficient conditions for the existence of 
	Hamiltonian cycles in graphs has a long history and Dirac's well known theorem~\cite{dirac1952} yields 
	a best possible minimum degree condition for this problem. The minimum degree of a graph turned out to be an interesting parameter 
	for enforcing a given spanning subgraph and establishing optimal minimum degree conditions for those problems became 
	a fruitful research direction in extremal graph theory (see, e.g.,~\cite{BST09} and the references therein). 
	Already about 50 years ago, the minimum degree 
	problem for $K_{k+1}$-factors was resolved by Corr\'adi and Hajnal~\cite{corradi1963} for $k=2$ and by Hajnal and 
	Szemer\'edi~\cite{hajnal1970} for every $k\geq 3$. P\'osa (see~\cite{E1964}) and Seymour~\cite{seymour1973} asked for 
	a common generalisation of those results on factors and Dirac's theorem and conjectured that the best possible 
	minimum degree conditions for $K_{k+1}$-factors and $k$-th powers of Hamiltonian cycles are the same 
	(given that the number of vertices is divisible by $k+1$). The general conjecture was affirmatively resolved 
	for sufficiently large graphs by Koml\'os, S\'ark\"ozy, and Szemer\'edi~\cite{komlos1998} by establishing the following result.
	\begin{thm}[Koml\'os, Sark\"ozy \& Szemer\'edi 1998]
		\label{thm:KSSz}
		For every positive integer~$k$ there exists~$n_0$ such that if~$G$ is a graph on~$n \geq n_0$ vertices 
		with minimum degree $\delta(G) \geq \frac{k}{k+1} n$,
		then~$G$ contains the~$k$-th power of a Hamiltonian cycle.\qed
	\end{thm}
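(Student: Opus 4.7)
\emph{Overall strategy.} The plan is to combine Szemerédi's Regularity Lemma, the Blow-up Lemma, and an absorption-style argument. The threshold $\frac{k}{k+1}$ is exactly the Hajnal--Szemerédi threshold for a $K_{k+1}$-factor, so the approach should first locate a $K_{k+1}$-factor in the reduced graph and then upgrade it to a $k$-th power of a Hamiltonian cycle by embedding long $k$-paths into the associated blown-up cliques and then stitching them together.

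\emph{Regularisation and a factor in the reduced graph.} Fix constants $1/n_0 \ll \eps \ll d \ll 1/k$. I would apply the Regularity Lemma to obtain a partition $V_0 \dcup V_1 \dcup \cdots \dcup V_t$ with $|V_0|\le\eps n$, parts of equal size $m$, and $\eps$-regularity of almost all pairs $(V_i,V_j)$. Let $R$ be the reduced graph on $[t]$ keeping only pairs that are $\eps$-regular with density at least $d$. A standard computation shows that $\delta(R)\ge(\frac{k}{k+1}-\eps')t$, so the Hajnal--Szemerédi theorem yields an (almost) spanning $K_{k+1}$-factor in $R$. Within each blown-up clique of this factor, the Blow-up Lemma embeds almost all of the $(k+1)m$ corresponding vertices into a $k$-th power of a path, with the endpoints prescribed in given subsets of the clusters.

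\emph{Absorption, connecting, and closing.} Before the above, I would set aside at the very start two small random subsets: an \emph{absorbing $k$-path} $A$ and a \emph{reservoir} $S\subseteq V(G)\sm V(A)$. The absorbing path is built so that for every vertex $v\in V(G)$ there are many small gadgets that can splice $v$ into $A$ while preserving its two $(\seq x,\seq y;k)$-endstructure. A probabilistic selection of polynomially many gadgets per vertex gives a single $k$-path $A$ with the property that any small leftover set $L$ can be absorbed, producing a $k$-path on $V(A)\cup L$ with unchanged ends. After performing the Blow-up step on $V(G)\sm(V(A)\cup S)$, one uses the reservoir and the high minimum degree of $G$ to prove a connecting lemma: any two suitably chosen $k$-tuples can be joined in $G$ by a short $k$-path through $S$. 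Concatenating the long $k$-paths produced by the Blow-up Lemma with these connectors and with $A$ yields a $k$-cycle covering all but $o(n)$ vertices, and the absorbing property of $A$ then swallows the leftover, producing the desired $k$-th power of a Hamiltonian cycle.

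\emph{Main obstacle.} The Regularity Lemma, Blow-up Lemma and Hajnal--Szemerédi can be black-boxed; the principal difficulty is the construction of absorbing gadgets for $k$-th powers. For a Hamiltonian cycle a single edge can be locally replaced by a length-two path through a new vertex, but for a $k$-path one must replace a $(2k+1)$-vertex segment by a $(2k+2)$-vertex segment that still induces all the clique-like adjacencies on both sides; such gadgets live inside a dense neighbourhood structure that must be guaranteed for \emph{every} vertex $v\in V(G)$. Showing that each vertex has polynomially many such gadgets, and then making the absorption fit seamlessly with the connecting lemma so that the endpoint ordered tuples of the various $k$-paths can be matched up, is the technical heart of the argument.
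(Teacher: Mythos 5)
This statement is quoted by the paper as an external result of Koml\'os, S\'ark\"ozy, and Szemer\'edi (the reference \cite{komlos1998}); the paper itself gives no proof, so there is nothing internal to compare your sketch against. Judged on its own terms, your outline is a reasonable modern blueprint (regularity + blow-up + absorption, very much in the spirit of how the present paper proves its Theorem~\ref{thm:main}), but it does not prove the theorem as stated.

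The essential gap is that every step of your argument loses an $\eps$, so at best it yields the \emph{approximate} version $\delta(G)\geq\bigl(\tfrac{k}{k+1}+\gamma\bigr)n$, whereas the theorem asserts the exact threshold $\delta(G)\geq\tfrac{k}{k+1}n$. Concretely: the reduced graph only satisfies $\delta(R)\geq(\tfrac{k}{k+1}-\eps')t$, which is \emph{below} the Hajnal--Szemer\'edi threshold, so you only get an almost-spanning $K_{k+1}$-factor and must somehow deal with the uncovered clusters and the exceptional set $V_0$ (whose redistribution is itself nontrivial --- it is the main difficulty the present paper isolates in Section~\ref{sec:except}). More seriously, your connecting lemma cannot be waved through ``by the high minimum degree'': at the exact threshold a $(k+1)$-set may have empty common neighbourhood, and in the extremal configuration (the complete balanced $(k+1)$-partite graph) any $k$-path must traverse the colour classes in a fixed cyclic pattern, so two $k$-cliques can only be joined by connectors of prescribed length modulo $k+1$. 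Handling such near-extremal graphs requires a separate stability/extremal-case analysis (or the very delicate direct argument of the original paper), and nothing in your sketch addresses it. As written, the absorption framework you describe proves a theorem with an extra $+\gamma n$ in the degree hypothesis, not Theorem~\ref{thm:KSSz}.
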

	Note that for $k=1$ we recover Dirac's theorem (up to the value of $n_0$) and complete and nearly 
	balanced $(k+1)$-partite graphs show that the minimum degree condition in Theorem~\ref{thm:KSSz} 
	is best possible for every~$k$. Those lower bound constructions are ruled out by restricting the 
	independence number of the large graph~$G$. Here we consider the following robust restriction that 
	imposes a uniformly positive edge density for subgraphs induced on linear sized subsets of vertices.
	
	\begin{dfn}
		\label{defn:rhod}	
		We say that a graph $G=(V,E)$ is \emph{$(\rho,d)$-dense} for~$\rho>0$ and $d\in[0,1]$ if
		\[
			e(U)\geq d\frac{|U|^2}{2}-\rho|V|^2
		\]
		for every subset $U\subseteq V$, where $e(U)$ denotes the number of edges of $G$ contained in $U$.
	\end{dfn}
	In fact, it was shown by Staden and Treglown~\cite{staden2018} that such a hereditary density 
	assumption for any~$d>0$ and sufficiently small $\rho>0$
	allows us to reduce the minimum degree condition for $k$-th powers of Hamiltonian 
	cycles to
	\begin{equation}\label{eq:12}
		\delta(G)\geq \Big(\frac{1}{2}+\mu\Big)|V(G)|,
	\end{equation}
	for any $\mu>0$	and sufficiently large vertex sets (see also~\cite{reiher2016} for $K_{k+1}$-factors).
	In particular, the minimum degree condition becomes independent of~$k$. Moreover, the graph~$G$ consisting of two disjoint 
	cliques on close to $n/2$ vertices
	shows that this degree condition 
	is essentially optimal.
	Note that 
	this example is ruled out by the following bipartite version of Definition~\ref{defn:rhod}, which requires 
	\begin{equation}\label{eq:birhod}
		e(X,Y)=\big|\big\{(x,y)\in X\times Y\colon xy\in E(G)\big\}\big|\geq d\,|X||Y|-\rho|V|^2
	\end{equation}
	for all subsets $X$, $Y\subseteq  V$. It was observed by Glock and Joos 
	(see~\cite{staden2018}*{Concluding Remarks}) that imposing property~\eqref{eq:birhod} on $G$  
	allows a further relaxation of the minimum degree condition for $G$ from~\eqref{eq:12} to $\mu |V|$ 
	for arbitrary $\mu>0$. We show that requiring property~\eqref{eq:birhod} for all subsets~$X$ and $Y$ is not needed. 
	It already suffices to assume it only for vertex bipartitions of $G$.
	\begin{dfn}
		\label{defn:muin}	
		We say that a graph $G=(V,E)$ is \emph{$\mu$-inseparable} for some $\mu>0$ if  
		\[
			e(X,V\sm X)\geq \mu\,|X||V\sm X|
		\]
		for every subset 
		$X\subseteq V$.
	\end{dfn}
	Invoking this assumption to subsets $X$ consisting of one vertex only, yields a linear minimum degree for
	$\mu$-inseparable graphs~$G$. It is not hard to show that $\mu$-inseparable graphs are ``well connected'' 
	(see, e.g., Lemma~\ref{lem:muinsep}). Our first result asserts that graphs satisfying the properties of 
	Definitions~\ref{defn:rhod} and~\ref{defn:muin} contain $k$-th 
	powers of Hamiltonian cycles for every fixed integer $k\geq 1$. 

\begin{thm}\label{thm:main}
	For every~$d$,~$\mu \in (0,1]$, and~$k \in \NN$ there exist~$\rho > 0$ and~$n_0$ such that every $(\rho, d)$-dense and $\mu$-inseparable graph~$G$ on~$n \geq n_0$ vertices contains the~$k$-th power of a Hamiltonian cycle.
\end{thm}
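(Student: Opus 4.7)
The plan is to use the absorption method within the regularity-lemma framework, which is by now the standard blueprint for spanning-subgraph problems of this generality. Since the $k$-th power of a Hamiltonian cycle has bounded bandwidth, I would in fact prove the bandwidth-theorem variant alluded to in the abstract and deduce Theorem~\ref{thm:main} from it as a special case.

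The first step is to understand the reduced graph. Applying Szemer\'{e}di's Regularity Lemma to $G$ yields a regular partition whose reduced graph $R$ inherits the structural hypotheses of $G$: testing Definition~\ref{defn:rhod} on unions of clusters shows that $R$ is essentially $d$-dense on linear subsets, and, crucially, $\mu$-inseparability of $G$ transfers to $R$, so every nontrivial cut in $R$ has density bounded below by roughly $\mu$. Density in $R$ supplies a $K_{k+1}$-tiling covering almost all of $R$, while inseparability guarantees that this tiling can be linked together.

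The technical heart of the argument is a \emph{connecting lemma}: any two ordered $k$-tuples $\seq x, \seq y$ of typical vertices can be joined in $G$ by an $(\seq x, \seq y; k)$-path of bounded length. Inside a single cluster, or a short chain of adjacent clusters, the $(\rho,d)$-density provides enough clique structure to realise prescribed ends; to route between distant clusters, one invokes the inseparability of $R$ in the form of Lemma~\ref{lem:muinsep} to produce a short cluster-path along which the $k$-clique structure can be propagated. With the connecting lemma in hand, I would run the now-standard absorption machinery to build an absorbing $k$-path $P_A$ capable of absorbing any sufficiently small leftover set, and reserve a linear-sized set $R_0$ for the final stitching. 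The bulk of $V(G)\setminus (V(P_A)\cup R_0)$ is then covered by a bounded number of long $k$-paths via the Blow-up Lemma applied to the $K_{k+1}$-structure in $R$; these are concatenated through short connecting $k$-paths inside $R_0$; and any tiny leftover is finally absorbed by $P_A$.

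The principal obstacle is the connecting lemma. Because the minimum degree of $G$ is only $\mu n$, far below the $\frac{k}{k+1}n$ driving the greedy argument of~\cite{komlos1998}, one cannot extend $k$-paths vertex by vertex. Moreover, $\mu$-inseparability is strictly weaker than the bipartite density condition~\eqref{eq:birhod} used by Glock and Joos, so cuts are traversed only on average, and the crossing edges need not by themselves support a $K_{k+1}$. The crux of the proof is therefore to design connecting paths that simultaneously respect the cut-based routing forced by inseparability and the local $K_{k+1}$-structure forced by density, while fitting neatly into a regularity partition; once this delicate interaction is controlled, the bandwidth strengthening and Theorem~\ref{thm:main} follow along lines similar to Staden--Treglown~\cite{staden2018}.
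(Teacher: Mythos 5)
Your proposal inverts the logical structure of the paper in a way that creates a genuine circularity. The paper proves Theorem~\ref{thm:main} \emph{directly} in $G$, with no regularity lemma: an almost perfect cover by long $k$-paths comes from a simple extremal argument on the hypergraph of $K_{k+1}$'s (Corollary~\ref{cor:pathcover}), the Connecting Lemma (Lemma~\ref{lem:conn}) is obtained by taking the many bounded-length walks guaranteed by Lemma~\ref{lem:muinsep} and ``blowing up'' their inner vertices one at a time into connectable $K_k$'s via supersaturation (Lemma~\ref{lem:dense}), and the absorbing path is assembled probabilistically (Lemma~\ref{lem:abspath}). Only afterwards is Theorem~\ref{thm:main2} deduced, by applying Theorem~\ref{thm:main} (in fact the strengthening, Theorem~\ref{thm:main1b}) to the reduced graph $R$ to obtain the Hamiltonian backbone needed for the blow-up lemma. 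Your plan runs the other way: prove the bandwidth theorem via regularity and read off Theorem~\ref{thm:main} as the special case $H=C_n^k$. But every regularity-based proof of such a bandwidth theorem --- including the one sketched in Section~\ref{sec:bandwidth} and those of~\cites{BST09,staden2018} --- needs, as input, a $k$-th power of a Hamiltonian cycle in the reduced graph $R$, which is uniformly dense and inseparable on $t\geq t_0$ vertices; that is precisely the statement of Theorem~\ref{thm:main} for $R$. You acknowledge this implicitly when you say ``density in $R$ supplies a $K_{k+1}$-tiling, while inseparability guarantees that this tiling can be linked together,'' but linking a clique tiling of $R$ into a spanning power of a cycle \emph{is} the theorem; no independent argument is offered, and one cannot invoke the bandwidth theorem for $R$ without regressing further.

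Beyond the circularity, two of your individual steps are underspecified in ways that matter. First, the connecting step: you propose to route through a cluster-path in $R$ and ``propagate the $k$-clique structure'' along it, but crossing edges of a $\mu$-sparse cut between clusters are only dense on average and need not sit inside regular pairs supporting $K_{k+1}$'s, so the actual mechanism (in the paper: walks in $G$ itself from Lemma~\ref{lem:muinsep}, then a rope blow-up landing the endpoints in the joint neighbourhoods $N(\seq x)$ and $N(\seq y)$) has to be supplied; this is the content of Claim~\ref{clm:ConnRope} and is not a routine consequence of regularity. Second, even granting a backbone in $R$, the blow-up-lemma route forces you to confront the exceptional-class redistribution problem~\ref{it:A}, which the paper only resolves via the extra machinery of Theorem~\ref{thm:main1b} and a random refinement of the partition --- machinery that is entirely unnecessary if one proves Theorem~\ref{thm:main} directly. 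The fix is to decouple the two theorems as the paper does: establish the Connecting Lemma and the absorbing path in $G$ without regularity, and reserve the regularity/blow-up apparatus for Theorem~\ref{thm:main2}.
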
	
	It is easy to see that every graph $G=(V,E)$ with minimum degree $\delta(G)\geq (1/2+\mu)|V|$
	is $\mu$-inseparable and, consequently, Theorem~\ref{thm:main} strengthens the result of Staden and 
	Treglown for powers of Hamiltonian cycles~\cite{staden2018}. 
	
	Moreover, the $n$-vertex 
	graph~$G$ obtained from two cliques of size $(1/2+\mu/2)n$ which intersect in $\mu n$ vertices is $\mu$-inseparable
    and $(\rho,1/2)$-dense (for any fixed $\rho >0$), while it fails to satisfy property~\eqref{eq:birhod} for arbitrary subsets~$X$ and~$Y$.
    This shows that our result is not covered by the observation of Glock and Joos~\cite{staden2018}*{Concluding Remarks}.
    
	Staden and Treglown and also Glock and Joos not only considered the 
	embedding problem for powers of Hamiltonian cycles, but more generally for bounded degree graphs (with and without 
	small bandwidth, see~\cite{staden2018} for details).  
	We also obtain a generalisation in that direction and establish the following version of the bandwidth theorem from~\cite{BST09}
	for inseparable and uniformly dense graphs (see Theorem~\ref{thm:main2} below). 
	
	We recall that the \emph{bandwidth} $\bw(H)$ of an $n$-vertex graph~$H$ is the 
	maximum distance of two adjacent vertices minimised over all possible orderings of the vertex set of $H$, i.e., 
	\[
		\bw(H)=\min_{\sigma}\max_{xy\in E(H)}\big|\sigma(x)-\sigma(y)\big|\,,
	\]
	where the minimum is taken over all possible bijections $\sigma\colon V(H)\to[n]$. We may refer to an ordering $\sigma$ of $V(H)$
	achieving this minimum $\bw(H)$ as a \emph{bandwidth ordering} of $H$.
\begin{thm}\label{thm:main2}
	For every~$d$,~$\mu\in (0,1]$, and~$\Delta \in \NN$ there exist~$\rho$,~$\beta > 0$ and~$n_0$ such that every $(\rho, d)$-dense and $\mu$-inseparable graph~$G$ 
	on~$n \geq n_0$ vertices contains every $n$-vertex graph~$H$ satisfying $\Delta(H)\leq\Delta$ and $\bw(H)\leq \beta n$.
\end{thm}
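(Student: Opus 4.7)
The plan is to combine the regularity method with absorption, following the template of the bandwidth theorem of B\"ottcher, Schacht, and Taraz, and adapting it to the $(\rho,d)$-dense and $\mu$-inseparable setting that underpins Theorem~\ref{thm:main}. I would begin by applying Szemer\'edi's Regularity Lemma to $G$ to obtain a partition $V(G)=V_0\dcup V_1\dcup\dots\dcup V_t$ with clusters $V_i$ of common size~$m$ for $i\geq 1$ and a small exceptional set $V_0$. Let $R$ denote the reduced graph on $[t]$ whose edges correspond to $\varepsilon$-regular pairs of density at least roughly~$d/4$. A straightforward counting argument, already implicit in the proof of Theorem~\ref{thm:main}, then shows that $R$ inherits $(\rho',d/2)$-density and $(\mu/2)$-inseparability from~$G$ for a suitable~$\rho'$.

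Next, I would exploit these inherited properties to locate a backbone in~$R$ suitable for embedding. Since $\Delta(H)\leq \Delta$, Brooks' theorem yields a proper $(\Delta+1)$-coloring of~$H$, and the bandwidth hypothesis ensures that any sufficiently short interval of the bandwidth ordering meets every color class in roughly equal proportions. Applying Theorem~\ref{thm:main} to~$R$ with $k=\Delta$ gives the $\Delta$-th power of a Hamilton cycle~$C$ in~$R$, so that every $\Delta+1$ consecutive clusters along~$C$ span a clique and therefore blow up to a copy of~$K_{\Delta+1}$ with regular sides in~$G$.

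Using~$C$, I would partition $V(H)$ along a bandwidth ordering into consecutive blocks $B_1,B_2,\dots$ whose sizes match the cluster sizes prescribed along~$C$. The bandwidth condition guarantees that every edge of~$H$ lies inside a block or between two adjacent blocks. I would assign block~$B_i$ to a window of $\Delta+1$ consecutive clusters along~$C$, distribute its vertices among these clusters according to the $(\Delta+1)$-coloring, and then invoke the blow-up lemma to carry out the embedding. Before the main embedding, a small absorbing structure should be reserved that is capable of later incorporating the vertices of~$V_0$ and any small remainder produced by the balancing.

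The principal obstacle I expect is the balancing step. Because $\mu$-inseparability is considerably weaker than a linear minimum degree assumption, the local swap arguments that usually adjust cluster sizes to match the color-class quotas imposed by~$H$ are not directly available. One must instead use the global connectivity inherent in inseparable graphs (cf.\ Lemma~\ref{lem:muinsep}) to reroute vertices between clusters while preserving the regularity of the partition, the backbone~$C$, and the cluster quotas demanded by the embedding. Engineering an absorber and a balancing argument that function under these weak hypotheses is where I anticipate most of the technical work of the proof will lie.
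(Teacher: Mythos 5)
Your outline follows the same general route as the paper: regularity lemma, a reduced graph $R$ inheriting uniform density and inseparability, Theorem~\ref{thm:main} applied to $R$ to produce a power of a Hamiltonian cycle as a backbone, a bandwidth-ordered block partition of $H$, and the blow-up lemma. But you stop exactly where the real work begins. The step you flag as ``the principal obstacle''---redistributing the exceptional class $V_0$ (together with the vertices moved into it to create super-regular pairs) without destroying super-regularity---is the one genuinely new ingredient of the proof, and your proposal offers no mechanism for it. Appealing to ``the global connectivity inherent in inseparable graphs'' does not suffice: a vertex $v\in V_0$ can only be placed into a cluster $V'_j$ if $v$ has large degree into every cluster of some clique of the $K_{\Delta+1}$-factor $F$ containing $j$, and inseparability only guarantees that the set $U_v$ of clusters into which $v$ has large degree has size $\Omega(t)$; it does not guarantee that the chosen power of a Hamiltonian cycle places any clique of $F$ inside $U_v$. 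The paper resolves this with a strengthening of Theorem~\ref{thm:main} (Theorem~\ref{thm:main1b}): by adapting the Absorbing Path Lemma one can insist that the $k$-th power of the Hamiltonian cycle contains $\Omega(t)$ cliques $K_k$ inside each of up to $2^{\gamma t}$ prescribed linear-sized sets. Since there are $n\gg 2^{\gamma t}$ vertices $v$, and hence a priori too many sets $U_v$, a second idea is needed: pass to a random refinement of the regular partition, splitting each $V_i$ into $s\gg 1/\gamma$ random parts (which preserves regularity and the equivalence $U_v=U_w$), so that the number of distinct sets to be hit is at most $2^{\mu'' t}\leq 2^{\gamma st}$ for the refined reduced graph on $st$ vertices. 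Without something playing the role of these two steps your argument does not close.

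Two smaller points. First, an arbitrary proper $(\Delta+1)$-colouring of $H$ (from Brooks' theorem or a greedy colouring) need not meet the colour classes of an interval in comparable proportions, so your claim that the bandwidth hypothesis forces this is unjustified; the paper instead applies the Hajnal--Szemer\'edi theorem to each interval of the bandwidth ordering to obtain an \emph{equitable} $(\Delta+1)$-colouring, which is what makes the quotas matchable to the cluster sizes. Second, taking $k=\Delta$ is too small for the later steps: to route the transition vertices between consecutive blocks of $H$ and to rebalance cluster sizes within a clique of $F$ without losing regularity, one wants all pairs between two consecutive cliques of the $K_{\Delta+1}$-factor to be regular, which the paper arranges by taking $C$ to be the $(2\Delta+1)$-st power of a Hamiltonian cycle in $R$.
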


\subsection*{Organisation}
The proof of Theorem~\ref{thm:main} utilises the absorption method of 
R\"odl, Ruci\'nski, and Szemer\'edi~\cite{RRSz06}. We discuss this approach in Section~\ref{sec:powham}
and give the details of the proof in Section~\ref{sec:pf-main}. For the proof we use some observations 
on uniformly dense and inseparable graphs, which we collect in Section~\ref{sec:denseInsep}.

Theorem~\ref{thm:main2} follows from Theorem~\ref{thm:main} combined with Szemer\'edi's regularity lemma~\cite{Sz78} and the accompanying blow-up lemma~\cite{blowup97}. Similar reductions appeared in the proofs of the bandwidth theorems in~\cites{BST09,staden2018}.
However, the main challenge in that approach is to deal with the vertices in the exceptional class of the regular partition and 
here we introduce new ideas. We sketch this proof in Section~\ref{sec:bandwidth}.
Finally, in Section~\ref{sec:remarks} we discuss some possible directions for future work.

\section{Uniformly dense and inseparable graphs}
\label{sec:denseInsep}
In this section we shall explore some properties of uniformly dense and inseparable graphs that are crucial for the proof of Theorem \ref{thm:main}. 

\subsection{Properties of uniformly dense graphs}

We start with the following well known fact that uniformly dense graphs contain many cliques of given size.
\begin{lemma}\label{lem:dense}
	For every~$k \in \NN$, $d \in [0, 1]$, and~$\rho > 0$, every~$(\rho, d)$-dense graph~$G$ on $n$ vertices contains at least~$\big( d^{\binom{k}{2}} - (k-1)k\rho \big) n^k$ ordered copies of~$K_k$. 	
\end{lemma}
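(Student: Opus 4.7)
My plan is to prove the lemma by induction on $k$. The base cases are immediate: for $k=1$ there are $n = n^1$ ordered copies of $K_1$, and for $k=2$ the $(\rho,d)$-density condition applied to $U = V$ yields $2e(G) \geq (d - 2\rho)n^2$, matching the formula $(d^1 - 2\rho)n^2$.

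For the inductive step from $k-1$ to $k \geq 3$, let $T_k(G)$ denote the number of ordered copies of $K_k$ in $G$. I would first express
\[
T_k(G) = \sum_{v \in V} T_{k-1}(G[N(v)])
\]
by selecting the last vertex of an ordered $K_k$. The key observation is that $G[N(v)]$ inherits a uniform density: for every $U \subseteq N(v)$ the inequality $e(U) \geq d|U|^2/2 - \rho n^2$ still holds, so $G[N(v)]$ is a $(\rho n^2/|N(v)|^2,\,d)$-dense graph on $|N(v)|$ vertices. Applying the induction hypothesis and multiplying through yields
\[
T_{k-1}(G[N(v)]) \;\geq\; d^{\binom{k-1}{2}}|N(v)|^{k-1} - (k-2)(k-1)\,\rho\, n^2 |N(v)|^{k-3}.
\]

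Summing over $v$, I would handle the two resulting sums separately. The error sum is trivially bounded using $\sum_v |N(v)|^{k-3} \leq n^{k-2}$, contributing at most $(k-2)(k-1)\rho\, n^k$. For the main sum, Jensen's inequality applied to the convex function $x^{k-1}$ gives $\sum_v |N(v)|^{k-1} \geq n^{-(k-2)}(2e(G))^{k-1} \geq (d-2\rho)^{k-1} n^k$ via the $k=2$ base case. A Bernoulli estimate $(d-2\rho)^{k-1} \geq d^{k-1} - 2(k-1)\rho$ (valid when $2\rho \leq d$; if $2\rho > d$ then the target bound is already negative and hence vacuous) together with the identity $\binom{k-1}{2} + (k-1) = \binom{k}{2}$ produces a main contribution of $d^{\binom{k}{2}} n^k - 2(k-1)\rho\, n^k$. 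The two losses combine to exactly $k(k-1)\rho\, n^k$, as claimed.

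The only mildly delicate point is that for vertices $v$ with a very small neighborhood the inductive lower bound on $T_{k-1}(G[N(v)])$ may be negative; this is harmless since the true count is nonnegative and we use the formula only as a lower-bound term inside a sum. Beyond this I do not anticipate any real obstacle: the argument is a clean two-sum decomposition with Jensen controlling the main contribution and a direct power-counting estimate handling the error.
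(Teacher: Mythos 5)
Your proposal is correct and follows essentially the same route as the paper: induction on the clique size, the observation that $G[N(v)]$ is $(\rho n^2/|N(v)|^2, d)$-dense, Jensen's inequality for the main term, a crude power count for the error term, and a Bernoulli-type estimate $(d-2\rho)^{k-1}\geq d^{k-1}-2(k-1)\rho$. The only differences are cosmetic (you index the step as $k-1\to k$ rather than $k\to k+1$, and you sum over all of $V$ with the negative-lower-bound caveat where the paper restricts to vertices of positive degree).
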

\begin{proof}
	Let~$G = (V, E)$ be a~$(\rho, d)$-dense graph and~$|V| = n$.
	For~$k = 1$, the assertion is trivial.
	For~$k = 2$, we are counting the number of edges twice. 
	Since $G$ is~$(\rho, d)$-dense, we have~$2|E| \geq 2(d/2 - \rho) n^2$ and the lemma follows.
	
	We continue by induction.
	Let~$k \geq 2$ and assume that for every~$\rho'$, $d'$, it is true that every $(\rho', d')$-dense graph~$H$ contains at least~$\big( d'^{\binom{k}{2}} - (k-1)k\rho' \big) |V(H)|^k$ ordered copies of~$K_k$.
	For counting the ordered copies of~$K_{k+1}$ in~$G$, consider the subset~$ V^* \subseteq V$ of the vertices~$ v \in V$ with~$|N(v)| \geq 1$.
	Let~$\hom(K_{k+1}, G)$ denote the number of ordered copies of~$K_{k+1}$ in~$G$.
	Consequently, we have that
	\[ \hom(K_{k+1},G) = \sum_{v \in V^*} \hom(K_k, G[N(v)]).\]
	
	Since~$G$ is~$(\rho, d)$-dense, for every~$v \in V^*$ and $X\subseteq N(v)$ we have
	\[
		e(X) 
		\geq 
		\frac{d}{2}|X|^2 - \rho n^2 
		= 
		\frac{d}{2}|X|^2 - \rho_v |N(v)|^2\,,
	\]
	for~$\rho_v = \rho n^2 / |N(v)|^2$.
	Thus $G[N(v)]$ is~$(\rho_v, d)$-dense and we can apply the induction hypothesis to get
	\begin{align*}
		\hom(K_{k+1}, G) &\geq \sum_{v \in V^*} \big( d^{\binom{k}{2}} - (k-1)k\rho_v \big) |N(v)|^k \\
		&= d^{\binom{k}{2}} \sum_{v \in V^*}|N(v)|^{k} - (k-1)k\sum_{v \in V^*}\rho_v|N(v)|^k \\
		&\geq d^{\binom{k}{2}} |V^*| \left(\frac{(d - 2\rho) n^{2}}{|V^*|} \right)^k - (k-1)k \sum_{v \in V^*} \frac{\rho n^2}{|N(v)|^2} |N(v)|^{k},
	\end{align*}	
	where the last estimate employed Jensen's inequality and~$\sum_{v \in V^*} N(v) = 2 |E| \geq (d - 2\rho)n^2$. Hence, 
	from~$k \geq 2$ we derive
	\begin{align*}	
		\hom(K_{k+1}, G) 
		&\geq d^{\binom{k}{2}} (d - 2\rho)^k n^{k+1} - (k-1)k\rho n^{k+1} \\
		&\geq d^{\binom{k}{2}} (d^k - 2k\rho) n^{k+1} - (k-1)k\rho n^{k+1} \\
		&\geq \big( d^{\binom{k+1}{2}} - k(k + 1)\rho \big)n^{k+1}\,,
	\end{align*}
	which concludes the proof of the lemma.
\end{proof}

As a corollary, we obtain the following result, which ensures the existence of fairly long $k$-paths in uniformly dense graphs. 
These $k$-paths will be the building blocks for an almost perfect $k$-path cover in the proof of Theorem \ref{thm:main}.
In that proof, we will connect these $k$-paths to an almost spanning $k$-path.
For the connection, it will be convenient to insist that the ends of the $k$-paths are contained in many~$K_{k+1}$'s.
For that we say a clique~$K_{k}$ is $\zeta$-\textit{connectable} in~$G$ if it is contained in at least~$\zeta|V(G)|$ cliques of order~$k+1$.
\begin{cor}[Path Lemma]\label{cor:pathcover}
	For every~$d \in (0, 1]$ and positive integer $k$, there exist~$\rho$, $\zeta > 0$, and $n_0$ such that if~$G$ is a~$(\rho, d)$-dense graph on~$n \geq n_0$ vertices, then~$G$ contains a $k$-path~$P$ with~$\zeta n$ vertices, where every consecutive~$K_k$ in~$P$ is~$\zeta$-connectable. 
\end{cor}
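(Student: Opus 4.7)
The plan is to derive the long $k$-path from the abundance of $K_{k+1}$'s supplied by Lemma~\ref{lem:dense}, using $\zeta$-connectability of sliding $K_k$-windows as the driver of a greedy extension.

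Choose $\rho$ small enough that Lemma~\ref{lem:dense} yields at least $cn^{k+1}$ ordered copies of $K_{k+1}$ in $G$ for some $c=c(d,k)>0$. A direct double count shows that the ordered $K_{k+1}$'s whose prefix or suffix $K_k$-sub-tuple is non-$\zeta$-connectable number at most $O(\zeta n^{k+1})$: each non-connectable unordered $K_k$ admits $<\zeta n$ extensions to $K_{k+1}$ and hence lies in $O(\zeta n)$ ordered $K_{k+1}$'s as a prefix (and similarly as a suffix), while there are only $O(n^k)$ unordered $K_k$'s. Thus for $\zeta=\zeta(d,k)$ sufficiently small, a positive-density fraction of ordered $K_{k+1}$'s are \emph{good}, i.e.\ both of their consecutive $K_k$-windows are $\zeta$-connectable. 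Any good $K_{k+1}$ provides an initial $k$-path of length $k+1$ with the desired property.

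For the inductive step, consider a $k$-path $P=(v_1,\ldots,v_m)$ with $m<\zeta n/2$ and every consecutive $K_k$-window $\zeta$-connectable. The tail $K_k$-window being connectable supplies at least $\zeta n$ candidates $v$ with $\{v_{m-k+1},\ldots,v_m,v\}$ a $K_{k+1}$ in $G$, of which at most $m<\zeta n/2$ lie in $V(P)$. To continue the induction we must choose $v$ so that the \emph{new} tail $K_k$ $\{v_{m-k+2},\ldots,v_m,v\}$ is again $\zeta$-connectable, and this is the principal obstacle: a priori, the $(k-1)$-set $\{v_{m-k+2},\ldots,v_m\}$ could have the property that most of its $K_k$-extensions are non-connectable, so the $\zeta n$ candidates could all fail the connectability requirement.

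My plan to overcome this is to strengthen the inductive invariant to a descending tower of regularity conditions on $P$: for each $j\in\{k,k-1,\ldots,2\}$ demand that every sliding $K_j$-window of $P$ possess at least $\zeta_j n$ extensions to a $\zeta_{j+1}$-regular $K_{j+1}$, where $\zeta_k=\zeta$ and $\zeta_k$-regularity of a $K_k$ just means its $\zeta$-connectability. Applying Lemma~\ref{lem:dense} at each clique size $j\in\{2,\ldots,k+1\}$ and iterating the double-counting of the previous paragraph at each level allows one to choose $\zeta_k\gg\zeta_{k-1}\gg\cdots\gg\zeta_2>0$ so that a positive-density fraction of $K_{k+1}$'s in $G$ satisfy the entire tower (providing a valid seed), and so that the extension step preserves every invariant: the $\zeta_{k-1}$-regularity of the sliding $K_{k-1}$ supplies many candidate $v$'s making the new tail $K_k$ connectable, then the $\zeta_{k-2}$-regularity of the sliding $K_{k-2}$ allows us to select from those a $v$ keeping the new tail $K_{k-1}$ regular, and so on down the tower. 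The chief technical care lies in choosing the $\zeta_j$ so that the loss at each level is absorbed by the level below; once this is arranged, iterating the extension until $m$ reaches $\zeta n$ yields the required $k$-path, all of whose consecutive $K_k$-windows are $\zeta$-connectable.
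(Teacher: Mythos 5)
You have correctly isolated the crux: when a $k$-path is grown greedily, the $\zeta$-connectability of the \emph{new} tail window $\{v_{m-k+2},\dots,v_m,v\}$ does not follow from that of the old one, so a strengthened invariant is needed. But the tower you propose does not close this gap. The $\zeta_{k-1}$-regularity of the window $\{v_{m-k+2},\dots,v_m\}$ supplies at least $\zeta_{k-1}n$ vertices $v$ for which $\{v_{m-k+2},\dots,v_m,v\}$ is a connectable $K_k$, but none of these is guaranteed to be adjacent to $v_{m-k+1}$, which is needed for $(v_1,\dots,v_m,v)$ to be a $k$-path at all. More generally, the candidate sets produced by the different levels of your tower live in the common neighbourhoods of \emph{different} vertex sets, and nothing in the invariant forces them to intersect each other or the $\zeta n$ candidates coming from the connectability of the full tail $K_k$; the phrase ``select from those'' presumes an intersection that is not there, and since $\zeta_k\gg\zeta_{k-1}\gg\cdots$, the lower levels are far too small to be forced to meet the higher ones by counting. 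No choice of the constants $\zeta_j$ repairs this: two subsets of $V$ of sizes $\zeta n$ and $\zeta_{k-1}n$ can simply be disjoint. What is required is a single condition guaranteeing, for the tail $k$-tuple, many extensions $v$ such that \emph{all} $k$-subtuples of $\{v_{m-k+1},\dots,v_m,v\}$ are simultaneously connectable and again satisfy the same condition.

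The paper obtains exactly this by a self-referential cleaning rather than a tower over window sizes: form the $(k+1)$-uniform hypergraph $\mathcal{H}_0$ of all $K_{k+1}$'s in $G$ and \emph{successively} delete hyperedges containing a $k$-tuple that lies in at most $\zeta n$ hyperedges; Lemma~\ref{lem:dense} shows at most half the edges are lost, so the resulting $\mathcal{H}$ is nonempty. In $\mathcal{H}$ every $k$-tuple covered by an edge has at least $\zeta n$ extensions \emph{within $\mathcal{H}$}, and every such extension is an edge all of whose $k$-subtuples are again covered, hence $\zeta$-connectable in $G$ and themselves extendable. A longest tight path in $\mathcal{H}$ therefore has all its consecutive windows connectable and must already contain all $\zeta n$ extensions of its final $k$-tuple, so it has at least $\zeta n$ vertices. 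If you wish to keep the spirit of your hierarchy, the workable version is a recursion on a single window size: call a covered $K_k$ ``$(i+1)$-good'' if it has $\geq\zeta n$ extensions to a $K_{k+1}$ all of whose $k$-subtuples are ``$i$-good''; the fixed point of this recursion is precisely what the iterated cleaning computes.
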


\begin{proof}
	Given~$d \in (0, 1]$ and a positive integer~$k$ we define the constants
	\begin{equation}\label{eq:denseConst}
		\rho = \frac{d^{\binom{k+1}{2}}}{2k(k+1)} \qquad \text{ and } \qquad \zeta = \frac{d^{\binom{k+1}{2}}}{3(k+1)}\,.
	\end{equation}
	
	Let~$G=(V,E)$ be a~$(\rho, d)$-dense graph with~$|V| = n$ sufficiently large.
	Applying Lemma~\ref{lem:dense} and considering the choice of constants in~\eqref{eq:denseConst} show that the number of ordered copies of~$K_{k+1}$ in~$G$ is at least
	\begin{equation}\label{eq:denseEdgeH}
	 \big( d^{\binom{k+1}{2}} - k(k+1)\rho \big) n^{k+1} = \frac{d^{\binom{k+1}{2}}}{2} n^{k+1} \,.
	\end{equation} 
	Define the auxiliary $(k+1)$-uniform hypergraph~$\mathcal{H}_0$ with~$V(\mathcal{H}_0) = V$ and
	\[ E(\mathcal{H}_0)= \left\{ e \in V^{(k+1)} \colon e\text{ spans a $K_{k+1}$ in $G$} \right\}\,. \] 
	
	Successively remove the hyperedges of~$\mathcal{H}_0$ which contain a $k$-tuple that is in at most~$\zeta n$ hyperedges and let~$\mathcal{H}$ be the resulting subhypergraph.
	Note that the number of erased edges is at most
	\[ \binom{n}{k} \cdot \zeta n \overset{\eqref{eq:denseConst}}{<} \frac{d^{\binom{k+1}{2}}}{2(k+1)!} n^{k+1} \overset{\eqref{eq:denseEdgeH}}{\leq} |E(\mathcal{H}_0)| \,.  \]
	Every $k$-tuple of vertices of~$G$ which is contained in some edge of~$\mathcal{H}$ is now~$\zeta$-connectable in~$G$.
	
	Consider \textit{tight paths} in~$\cH$, which are subhypergraphs~$P$ with~$V(P) = \{ x_1, \dots, x_{\ell} \}$ and~$e \in E(P)$ if and only if~$e= \{ x_i, x_{i+1}, \dots, x_{i + k} \}$ for every~$i = 1, \dots, \ell - k$. In particular, consecutive hyperedges in such a path intersect in~$k$ vertices. Observe that any tight path in~$\mathcal{H}$ induces a $k$-path in~$G$ with 
every consecutive~$K_k$ being~$\zeta$-connectable.
	
	Take the longest tight path~$P_0$ in~$\mathcal{H}$. Let~$K$ be the set of the last~$k$ vertices in~$P_0$. 
	If~$e \in E(\mathcal{H})$ is of the form~$e = K \cup \{u\}$ for some~$u \in V$, then~$u$ is already contained in~$P_0$, otherwise the tight path could be enlarged.
	Since every~$k$-tuple contained in some hyperedge of~$\mathcal{H}$ is in at least~$\zeta n$ hyperedges, we know that~$P_0$ has at least~$\zeta n$ vertices. 
\end{proof}

\subsection{Properties of inseparable graphs}
In this section we consider inseparable graphs. First we note that removing a small set of vertices has only little effect on the inseparability.

\begin{lemma}\label{lem:muprime}
	For every~$\mu > 0$ and~$\beta \in (0, 1/2)$, the following holds. If~$G = (V, E)$ is $\mu$-inseparable and~$U \subseteq V$ with~$|U| \leq \beta\mu n$, then~$G[V \setminus U]$ is $(1 - 2\beta)\mu$-inseparable.
\end{lemma}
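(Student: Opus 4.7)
The plan is to fix an arbitrary cut $X \subseteq V \setminus U$ of the induced subgraph $G[V \setminus U]$, set $Y = (V \setminus U) \setminus X$, and directly derive the bound $e_G(X,Y) \geq (1-2\beta)\mu|X||Y|$ by applying the $\mu$-inseparability of $G$ to the original cut $(X, V \setminus X) = (X, Y \dcup U)$ and subtracting the at most $|X||U|$ edges that go from $X$ into $U$. Since the roles of $X$ and $Y$ are symmetric (both give the same cut in $G[V\sm U]$), I may assume $|Y| \geq |X|$, so in particular $|Y| \geq (n-|U|)/2$.

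First I would write, using $\mu$-inseparability of $G$,
\[
    e_G(X,Y)
    \;=\; e_G(X, V \sm X) - e_G(X,U)
    \;\geq\; \mu |X|(|Y| + |U|) - |X||U|.
\]
The target inequality $e_G(X,Y) \geq (1-2\beta)\mu|X||Y|$ therefore reduces, after dividing by $|X|$, to showing
\[
    2\beta \mu |Y| \;\geq\; (1-\mu)|U|.
\]
Here I would plug in $|U| \leq \beta\mu n$ and the lower bound $|Y| \geq (n-|U|)/2 \geq (1-\beta\mu)n/2$, which gives $2\beta\mu|Y| \geq \beta\mu(1-\beta\mu)n$. Since $(1-\mu)|U| \leq (1-\mu)\beta\mu n$ and $1-\beta\mu \geq 1-\mu$ (because $\beta<1/2<1$), the desired inequality follows.

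There is no real obstacle here; the only point that needs attention is to exploit symmetry so that $|Y|$ is at least half the size of $V \sm U$, since otherwise the small side of the cut could be too tiny for the $|X||U|$ loss term to be absorbed. The choice of the factor $(1-2\beta)$ on the right-hand side is precisely what is dictated by this accounting: the inseparability assumption contributes weight $|Y|+|U|$, of which the fraction $|U|/(|Y|+|U|) \leq |U|/|Y| \lesssim 2\beta$ is lost to the removed vertices.
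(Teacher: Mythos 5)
Your proof is correct and follows essentially the same route as the paper's: both exploit symmetry to make the larger side of the cut have size at least about $n/2$, apply $\mu$-inseparability to the cut $(X, V\setminus X)$ in $G$, and absorb the at most $|X||U|$ edges lost to $U$ using $|U|\leq\beta\mu n$. The paper phrases this as a proof by contradiction while you argue directly, but the accounting is identical.
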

\begin{proof}
	Suppose by contradiction that~$G[V \setminus U] = (V', E')$ is not $(1 - 2\beta)\mu$-inseparable. Thus, there exists~$X \subseteq V'$ with $|X|\leq n/2$ such that~$e(X, V' \setminus X) < (1 - 2\beta)\mu |X||V' \setminus X|$. 
	Consider the partition of~$V$ into the sets~$X$ and~$(V' \setminus X) \cup U = V \setminus X$. We have that
	\begin{align}
	e(X, V \setminus X) &< (1 - 2\beta)\mu |X||V' \setminus X| + |U||X| \nonumber \\
	&= (1 - 2\beta)\mu |X|(|V| - |U| - |X|) + |U||X| \nonumber\\
	&= \mu |X||V \setminus X| -2\beta\mu |X||V \setminus X|  + ( 1 - ( 1 - 2\beta)\mu)|U||X| \,. \label{eq:muprime}
	\end{align}
	Since $|V \setminus X| \geq n /2$,~$\beta \mu n \geq |U|$, and~$\beta < 1/2$ we have
	\[2\beta\mu |X||V \setminus X| \geq \beta\mu n |X| \geq |U||X| \geq ( 1 - ( 1 - 2\beta)\mu)|U||X|\,.\]
	Together with \eqref{eq:muprime}, we derive that~$e(X, V \setminus X) < \mu |X||V \setminus X|$, which contradicts the assumption that~$G$ is~$\mu$-inseparable.
\end{proof}

The key property of inseparable graphs is that between any pair of vertices there exist many paths of bounded length.

\begin{lemma}\label{lem:muinsep}
	For every $\mu \in (0,1]$, there exist $c > 0$ and integers $L$, $n_0$ such that every $\mu$-inseparable graph~$G = (V, E)$ on~$|V| = n \geq n_0$ vertices satisfies the following.
	
	For every two distinct vertices~$x$,~$y \in V$, there is some integer~$\ell$ with~$0 \leq \ell \leq L$ such that 
	the number of $(x, y)$-walks with~$\ell$ inner vertices in~$G$ is at least~$cn^{\ell}$.
\end{lemma}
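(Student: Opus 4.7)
The plan is a two-phase approach: first, use $\mu$-inseparability to establish basic combinatorial properties of~$G$ (diameter bound and BFS-ball expansion); second, use these properties to count walks via a middle-edge decomposition.

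For the first phase, applying $\mu$-inseparability to a singleton~$\{v\}$ gives $\deg(v)\ge\mu(n-1)$, and a standard BFS-layer argument bounds the diameter of~$G$ by~$D\le 4/\mu+1$: each~$v$ in BFS-layer~$L_i$ has all its $\ge\mu(n-1)$ neighbors in~$L_{i-1}\cup L_i\cup L_{i+1}$, so $|L_{i-1}|+|L_i|+|L_{i+1}|\ge\mu(n-1)$, and summing over the~$D-1$ interior layers using $\sum_i|L_i|\le n$ yields the bound. The same principle applied to the BFS ball~$B_\ell(x):=\{v:d(x,v)\le\ell\}$ gives $|B_{\ell+1}(x)\setminus B_\ell(x)|\ge\mu(n-|B_\ell(x)|)$, so $n-|B_\ell(x)|$ decays geometrically by a factor of~$1-\mu$ per step.

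For the second phase, let $f_\ell(v):=W_\ell(x,v)$ count the walks of length~$\ell$ from~$x$ to~$v$. An induction using the minimum-degree bound yields $\sum_v f_\ell(v)\ge(\mu n/2)^\ell$, and combined with the trivial bound $f_\ell(v)\le n^{\ell-1}$ and averaging, the set $A_\ell(x):=\{v:f_\ell(v)\ge(\mu/4)^\ell n^{\ell-1}\}$ satisfies $|A_\ell(x)|\ge(\mu/2)^\ell n/2$. For each pair~$(x,y)$, I plan to use the middle-edge decomposition
\[
 W_{2L+1}(x,y)=\sum_{uv\in E(G)} W_L(x,u)\,W_L(v,y)\,,
\]
which, restricted to edges with $u\in A_L(x)$ and $v\in A_L(y)$, gives $W_{2L+1}(x,y)\ge e(A_L(x),A_L(y))\cdot(\mu/4)^{2L}n^{2L-2}$. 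The lemma then follows with $\ell=2L$ inner vertices, provided $e(A_L(x),A_L(y))=\Omega(n^2)$.

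The main obstacle will be obtaining this quadratic edge bound. Because $|A_L(x)|$ and $|A_L(y)|$ are only a small constant fraction of~$n$ (with the constant decaying in~$L$), $\mu$-inseparability controls edges from~$A_L(x)$ to $V\setminus A_L(x)$ but not specifically to~$A_L(y)$. I expect to address this by iterating over $\ell\in\{1,\ldots,L\}$ and taking suitable unions of the $A_\ell$, adjusting for parity so that a walk of the correct length between~$x$ and~$y$ exists, until the combined ``good'' sets on each side are large enough (close to~$n/2$) that $\mu$-inseparability forces linearly many edges between them. Carefully balancing the choice of~$L$ against the threshold $(\mu/4)^\ell$ in the definition of~$A_\ell$ is the delicate technical point.
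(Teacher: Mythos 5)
There is a genuine gap, and it sits exactly where you flag it. Your sets $A_\ell(x)$ are the paper's sets $X_\ell$ in disguise, and the entire content of the lemma is the claim you leave unproved: that the union $\bigcup_{j\le \ell}A_j(x)$ grows by $\Omega(n)$ at each step until it exceeds $2n/3$. (Your Phase~1 facts --- minimum degree, diameter, BFS-ball expansion --- do not help here, because membership in $A_\ell(x)$ requires \emph{many} walks, not just one path, so the BFS balls and the sets $A_\ell$ are different objects.) Moreover, your fixed threshold $(\mu/4)^{\ell}n^{\ell-1}$ cannot support the growth argument. The natural way to prove growth is: if $U=\bigcup_{j\le \ell}A_j(x)$ has $|U|\le 2n/3$ and few vertices join at step $\ell+1$, bound $e(U,V\setminus U)$ from above by $|V\setminus U|\cdot\sum_{j\le\ell}(\delta_{j+1}/\delta_j)n$ plus an error term, and contradict $e(U,V\setminus U)\ge\mu|U||V\setminus U|$. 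This needs $\sum_j \delta_{j+1}/\delta_j\ll\mu|U|/n$; with your constant ratio $\delta_{j+1}/\delta_j=\mu/4$ over the $L\approx 8/\mu$ required steps the sum is about $2$, which is far too large. The paper takes $\delta_i=(\mu^2/3)^i\,2^{-\binom{i+1}{2}}$ precisely so that these ratios decay geometrically and sum to at most $\mu^2/3$.

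Your endgame is also the wrong move. Even once both unions exceed $2n/3$, inseparability does \emph{not} force $\Omega(n^2)$ edges between them: the two unions can largely coincide, and a set of size $2n/3$ in a $\mu$-inseparable graph can be independent (the larger side of $K_{2n/3,\,n/3}$), so $e(U_x,U_y)$ may carry no useful lower bound; the bipartite example also shows your fixed-length identity $W_{2L+1}(x,y)=\sum_{uv\in E}W_L(x,u)W_L(v,y)$ can be identically zero for parity reasons. The paper avoids both problems by not routing through an edge at all: the two unions of size $>2n/3$ intersect in $\ge n/3$ vertices, a pigeonhole over the $O(1/\mu^2)$ pairs of indices produces a single pair $(a,b)$ with $|X_a\cap Y_b|\ge \mu^2n/48$, and concatenating an $(x,v)$-walk with a $(v,y)$-walk at each such $v$ gives $\delta_a\delta_b n^{a+b}|X_a\cap Y_b|$ walks with $\ell=a+b+1$ inner vertices --- no edge between the two sides and no parity adjustment is needed. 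You should replace the middle-edge decomposition by this intersection-and-concatenation step, and redo the threshold bookkeeping with super-geometrically decaying $\delta_i$.
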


\begin{proof}
	Given $\mu$ we define
	\begin{equation}\label{eq:muinsepCons}
		 L = \left\lfloor \frac{8}{\mu} \right\rfloor, \qquad \delta_{i} = \left( \frac{\mu^2}{3} \right)^{i}\left( \frac{1}{2} \right)^{\binom{i+1}{2}}, \qquad \text{ and } \quad c = \frac{\mu^2}{48} \delta_{\lfloor 4/ \mu \rfloor}^2\,.
	\end{equation}
	
	Let~$G$ be a sufficiently large~$\mu$-inseparable graph on~$n$ vertices and~$x$,~$y$ be two distinct vertices of~$G$. Consider for each~$i \geq 0$ the set of vertices~$v$ that can be reached from~$x$ by ``many'' walks in~$G$ with~$i$ inner vertices. For that we define
	\begin{align*}
		X_i = \left\{ v \in V\colon \text{there are~$\delta_i n^i$  $(x, v)$-walks with~$i$ inner vertices}  \right\} \quad \text{ and } \quad X^i = \bigcup_{0\leq j \leq i} X_j\,.
	\end{align*}
	Analogously, consider the vertices~$v$ that can be reached from~$y$ by~$\delta_i n^i$ walks in~$G$ with~$i$ inner vertices and define the sets~$Y_i$ and~$Y^i$ in the same way.
	
	Observe that~$X_0 = X^0 = N(x)$ and since~$G$ is~$\mu$-inseparable,~$|N(x)| \geq \mu (n -1)$. Moreover,~$X^i \subseteq X^{i+1}$ and we shall show that as long as~$|X^{i}|$ is not too large, then~$|X^{i+1}|$ is substantially larger than~$|X^i|$.
	More precisely, we show for every~$i\geq 0$ that
	\begin{equation}\label{eq:muinsepIncrease}
		|X^i| \leq \frac{2}{3}n \quad \implies \quad |X^{i+1} \setminus X^i| \geq \frac{\mu}{6} n \,.
	\end{equation}  
	
	Before verifying \eqref{eq:muinsepIncrease}, we conclude the proof of Lemma~\ref{lem:muinsep}. In fact,~\eqref{eq:muinsepIncrease} implies that there is some~$i_0 < \lfloor 4/ \mu \rfloor$ such that~$|X^{i_0}| > 2n/3$. Applying the same argument for~$Y^i$, we get some~$j_0 < \lfloor 4/ \mu \rfloor$ such that~$|Y^{j_0}| > 2n /3$ and, hence,~$|X^{i_0} \cap Y^{j_0}| \geq n/3$.
	
	Each vertex~$ v \in X^{i_0} \cap Y^{j_0}$ can be used to create many $(y,x)$-walks with possibly different number of inner vertices. However, by the pigeonhole principle there are integers~$a$,~$b$ with~$0 \leq a \leq i_0$ and~$0 \leq b \leq j_0$ such that
	\begin{equation}\label{eq:muinsepInters}
		|X_a \cap Y_b| \geq \frac{|X^{i_0} \cap Y^{j_0}|}{(i_0 + 1)(j_0 + 1)} \geq \frac{\mu^2 n}{48}\,.
	\end{equation}
		
	For each~$v \in X_a \cap Y_b$ there exist~$\delta_{a}n^{a}$ $(x, v)$-walks and~$\delta_{b}n^{b}$ $(v, y)$-walks with~$a$ and~$b$ inner vertices, respectively.
	Concatenating these walks leads to at least
	\[ \delta_{a} \delta_{b} n^{a + b} \cdot |X_a \cap Y_b| \] 
	$(x, y)$-walks, with~$\ell = a + b + 1$ inner vertices.
	The choice of constants in~\eqref{eq:muinsepCons} and~\eqref{eq:muinsepInters} conclude the proof.
	
	It is left to verify \eqref{eq:muinsepIncrease}. Suppose~$|X^i| \leq 2n/3$ and consider the complement~$Z = V \setminus X^i$. 
	Owing to the~$\mu$-inseparability of~$G$ we have
	\begin{equation}\label{eq:muinsepLowEdges}
	e(X^i, Z) \geq \mu|X^i||Z| \,.		
	\end{equation}
	
	Note that each vertex~$v$ with at least~$\delta_{j+1} n / \delta_j$ neighbours in~$X_j$ belongs to~$X_{j+1}$.
	Since~$Z$ is disjoint from~$X^i$, we have
	\begin{equation}\label{eq:muinsepUpEdges1}
		e(X^{i - 1}, Z) < |Z| \cdot \sum_{j = 0}^{i - 1} \frac{\delta_{j+1}}{\delta_j} n \,.
	\end{equation}
	Moreover, supposing by contradiction that~\eqref{eq:muinsepIncrease} fails, we also have
	\begin{equation} \label{eq:muinsepUpEdges2}
	e(X_i, Z) < |Z| \cdot \frac{ \delta_{i+1}}{\delta_i}n + \frac{\mu}{6} |X_i| n \,. 
	\end{equation}
	
	Combining \eqref{eq:muinsepUpEdges1} and \eqref{eq:muinsepUpEdges2} we arrive at
	\begin{equation}\label{eq:muinsepUpEdges}
		e(X^i, Z) < |Z| \cdot \sum_{j = 0}^{i-1}\frac{\delta_{j+1}}{\delta_j} n + |Z|\cdot \frac{ \delta_{i+1}}{\delta_i}n + \frac{\mu}{6} n |X_i| =
		|Z| \cdot \sum_{j = 0}^{i} \frac{\delta_{j+1}}{\delta_j} n +  \frac{\mu}{6} n |X_i| \,. 
	\end{equation}
	Owing to the choice of~$\delta_j$ in \eqref{eq:muinsepCons} we have 
	\begin{align*}
		\sum_{j = 0}^{i} \frac{\delta_{j+1}}{\delta_j} = \frac{\mu^2}{3} \sum_{j = 0}^{i} \left( \frac{1}{2} \right)^{j+1} \leq \frac{\mu^2}{3} \,.
	\end{align*}
	Furthermore, since~$ |X^i| \geq |X^0| = |N(x)| \geq \mu(n-1) $ and~$|Z| = |V \setminus X^i| \geq n/3$, we derive for sufficiently large~$n$ from \eqref{eq:muinsepUpEdges} that
	\[ e(X^i, Z)< \frac{\mu^2}{3}|Z|n + \frac{\mu}{6}|X_i|n \leq \frac{\mu}{2}|Z||X^i| + \frac{\mu}{2}|X_i||Z| \leq \mu|X^i||Z|\,, \]
	which contradicts \eqref{eq:muinsepLowEdges}.
\end{proof}

\section{Absorption method and powers of Hamiltonian cycles}
\label{sec:powham}
	
The proof of Theorem~\ref{thm:main} is based on the absorption method and follows the strategy from~\cite{RRSz06}. Roughly speaking, this method 
splits the problem of finding a $k$-th power of a Hamiltonian cycle into the following three parts:
	\begin{enumerate}
		\item finding an almost perfect cover with only ``few'' $k$-paths, 
	 	\item ensuring the abundant existence of so-called \textit{absorbers}, and
		\item connecting those absorbers and paths to an almost spanning $k$-th power  of a cycle.
	\end{enumerate}
	The first part is achieved by Corollary~\ref{cor:pathcover} and only makes use of the $(\rho,d)$-denseness of~$G$.
	For the second part of the absorption method again the $(\rho,d)$-denseness of~$G$ suffices. However, for the connection of 
	these absorbers the $\mu$-inseparability is required. The appropriate \emph{connecting lemma}, which is also utilised for connecting 
	the paths of the almost perfect cover from the first part, is given in Section~\ref{subsec:connecting}.
	In Section~\ref{sec:apl} we establish the \emph{absorbing path lemma} and in Section~\ref{sec:pf-main} we combine these results and 
	deduce Theorem~\ref{thm:main}.
	
\subsection{Connecting Lemma}
\label{subsec:connecting}
The connecting lemma asserts that any two connectable $K_k$'s in a uniformly dense and inseparable graph $G$ are connected by ``many'' $k$-paths of bounded length. As shown in Lemma~\ref{lem:muinsep}, for $k=1$ this is a direct consequence of the inseparability. For $k\geq 2$ we combine 
Lemma~\ref{lem:muinsep} with Lemma~\ref{lem:dense} by a standard supersaturation argument to obtain the desired $k$-paths.

\begin{lemma}[Connecting Lemma]\label{lem:conn}
	For every~$d$, $\mu\in(0,1]$, $\zeta > 0$, and every integer~$k \geq 1$, there exist~$\rho$,~$\xi  > 0$ and integers~$M$, $n_0\in\NN$ such that every $(\rho, d)$-dense and $\mu$-inseparable graph~$G=(V,E)$ on~$|V|=n \geq n_0$ vertices satisfies the following.
	
	For every pair~$\vec x$, $\vec y\in V^k$ of disjoint $\zeta$-connectable~$K_k$ in~$G$, there is some integer $m\leq M$ such that the number of  $(\vec x, \vec y; k)$-paths with~$m$ inner vertices in~$G$ is least~$\xi n^m$.
\end{lemma}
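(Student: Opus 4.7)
I would split the argument by the value of $k$.

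For $k=1$ the statement is essentially immediate from Lemma~\ref{lem:muinsep}. Indeed, $\vec x = x$ and $\vec y = y$ are single vertices, and the lemma provides some $\ell \le L$ with $cn^\ell$ walks from $x$ to $y$ having $\ell$ inner vertices. At most $O(n^{\ell-1})$ of these walks repeat a vertex, so at least $(c/2) n^\ell$ are genuine paths once $n$ is large; this delivers the claim with $m=\ell$.

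For $k\ge 2$ the plan is to combine Lemma~\ref{lem:dense} with Lemma~\ref{lem:muinsep} by a standard three-stage supersaturation argument.

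\emph{Stage 1 (extending the ends into flexible anchors).} Use the $\zeta$-connectability of $\vec x$ to obtain $\zeta n$ vertices $u_1$ with $(\vec x,u_1)\in K_{k+1}$. A Jensen-type application of Lemma~\ref{lem:dense} bounds by $o(n)$ the number of those $u_1$ for which the new terminal $K_k$ $(x_2,\dots,x_k,u_1)$ fails to be $\zeta_1$-connectable, for a suitable $\zeta_1>0$ depending on $d, \zeta, k$. Iterating this extension at most $k$ times produces a set $A_x\subseteq V$ of at least $\eta n$ \emph{anchors}, each reachable from $\vec x$ by $\Omega(n^{k-1})$ $k$-paths of bounded length, with a $\zeta'$-connectable terminal $K_k$ at the anchor. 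Construct $A_y$ analogously on the $\vec y$ side.

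\emph{Stage 2 (connecting the anchors by short walks).} For each pair $(a,b)\in A_x\times A_y$ Lemma~\ref{lem:muinsep} gives some $\ell(a,b)\le L$ and $cn^{\ell(a,b)}$ $(a,b)$-walks. Pigeonholing over the at most $L+1$ possible lengths yields a fixed $\ell^\star\le L$ for which $\Omega(n^2)$ pairs admit $\Omega(n^{\ell^\star})$ walks of that specific length.

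\emph{Stage 3 (thickening a walk into a $k$-path).} Between consecutive vertices $w_i,w_{i+1}$ of such a skeleton walk, insert $k-1$ fresh vertices so that every $k+1$ consecutive positions in the resulting sequence span a $K_{k+1}$, and so that the inserted vertices mesh with the $\zeta'$-connectable $K_k$'s at the two ends produced in Stage 1. Proceed left-to-right one vertex at a time: at each step Lemma~\ref{lem:dense} applied to the common neighborhood of the last $k$ placed vertices (which has size $\Omega(n)$ up to an $o(n)$ exceptional set) furnishes $\Omega(n)$ valid choices. This yields $\Omega(n^{(k-1)(\ell^\star+1)})$ $k$-paths over each acceptable skeleton. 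Concatenating with the two Stage~1 extensions and summing over the $\Omega(n^2)$ good pairs from Stage~2 gives $\Omega(n^m)$ $(\vec x,\vec y;k)$-paths with $m\le M$ inner vertices, where $M=M(k,L,\mu,d)$ is bounded and $\rho,\xi>0$ are chosen to make the chain of constants close up.

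The main obstacle is Stage~3. The inserted vertices must satisfy overlapping clique constraints that cross the boundary between consecutive insertion blocks (the ``$k+1$-window'' spans more than one walk edge), so the insertions are not independent. The resolution is to do the insertion one vertex at a time and at each step invoke the supersaturation consequence of Lemma~\ref{lem:dense}, namely that a typical $K_k$ in $G$ is $\zeta'$-connectable with only an $o(n^k)$ exceptional set; carefully tracking these exceptional sets guarantees that the count $\Omega(n^m)$ survives the product.
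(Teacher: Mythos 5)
Your $k=1$ case is fine, and Stages~1--2 track the paper's opening moves: connect $N(\vec x)$ to $N(\vec y)$ by the walks of Lemma~\ref{lem:muinsep} and pigeonhole over the at most $L+1$ possible lengths. The gap is exactly where you locate it, in Stage~3, and your proposed resolution does not close it. If the skeleton vertices $w_i$ are retained in the final $k$-path, then $w_i$, $w_{i+1}$, and the $k-1$ vertices inserted between them occupy $k+1$ consecutive positions and must span a $K_{k+1}$; in particular every insertion in that gap must be made inside $N(w_i)\cap N(w_{i+1})$. But Lemma~\ref{lem:muinsep} is proved purely from inseparability and gives no control over which edges its walks use, and the hypotheses of the lemma give no lower bound on the common neighbourhood of the two endpoints of a \emph{specific} edge: one can build $(\rho,d)$-dense, $\mu$-inseparable graphs containing an edge $uv$ with $N(u)\cap N(v)=\emptyset$, and more generally the set of edges whose common neighbourhood is smaller than $\epsilon n$ need only be $O\bigl((\epsilon+\sqrt{\rho})n^2\bigr)$, not $o(n^2)$, so the walks through such edges are not negligible against the $cn^{\ell}$ walks you start from without a quantitative cleaning argument you do not supply. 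The parenthetical ``which has size $\Omega(n)$ up to an $o(n)$ exceptional set'' is therefore unjustified, and the claim that all but $o(n^k)$ of the $K_k$'s are $\zeta'$-connectable is false in general --- supersaturation yields $\Omega(n^k)$ connectable cliques, not almost all --- and in any case irrelevant here, since the tuples you need to be good contain the fixed, non-typical vertices $w_i$ chosen before any goodness was checked. A similar overstatement occurs in Stage~1 (one gets $\Omega(n)$ good extensions $u_1$, not all but $o(n)$), though there the weaker conclusion still suffices.

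The paper avoids all of this by \emph{not} keeping the walk vertices in the final path. It introduces $(\vec x,\vec y;k,\ell,a)$-ropes, in which the last $\ell-a$ inner slots are still single vertices attached only by single edges, and proves by induction on $a$ (Claim~\ref{clm:ConnRope}) that many ropes survive each step: the step from $a$ to $a+1$ \emph{replaces} the single vertex in slot $a+1$ by an entire $K_k$ drawn from the set $U_{\mathcal Z}$ of all vertices that could occupy that slot given the other parts; an averaging argument shows $U_{\mathcal Z}$ has linear size for many ropes, and Lemma~\ref{lem:dense} then supplies $\Omega(n^k)$ cliques inside it. Because each clique is chosen from a linear-sized candidate set rather than forced to contain a prescribed walk vertex, the obstruction above disappears, and the overlapping-window constraints you worry about never arise. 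To salvage your skeleton-and-thicken picture you would have to first restrict to walks all of whose edges (and higher-order windows) are ``good'', with constants calibrated against the $c$ of Lemma~\ref{lem:muinsep}; carried out properly, that bookkeeping essentially reproduces the rope induction.
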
	

\begin{proof}
	Given~$d$, $\mu$, $\zeta$ and~$k$, we shall fix constants~$\rho$, $\xi$, $M$ and~$n_0$. For that we first 
	apply Lemma~\ref{lem:muinsep} for~$\mu$ and obtain~$L$ and $c$. Next we define auxiliary constants~$\xi_i$ for integers~$i \geq 0$ inductively through
	\begin{equation}\label{eq:connConst1}
	\xi_0 = \frac{\zeta^2 c}{L + 1} \qquad \text{ and } \qquad \xi_{i+1} =\frac{d^{ \binom{k}{2} }}{2 k!} \left( \frac{\xi_i}{2} \right)^{k+1} \,.  
	\end{equation}
	Finally, we set
	\begin{equation}\label{eq:connConst2}
		\xi = \frac{\xi_{L+2}}{2}, \qquad \rho = \frac{d^{ \binom{k}{2} } \xi^2}{8 k^2}, \qquad \text{ and } \qquad M = (L + 2)k\,,
	\end{equation}
	and let~$n$ be sufficiently large.
	
	Let~$G$ be a~$(\rho, d)$-dense and~$\mu$-inseparable graph on~$n$ vertices and let~$\vec x = (x_1, \dots, x_k)$ and~$\vec y = (y_1, \dots, y_k )$ be two disjoint~$\zeta$-connectable $K_k$ in~$G$. 
	
	We consider the following type of graphs that will be useful to obtain the desired $(\vec x, \vec y; k)$-paths. 
	For integers~$k \geq 1$,~$\ell \geq 0$ and~$0 \leq a \leq \ell$, a graph~$R$ is a~$(k, \ell, a)$-\textit{rope} if it can be obtained from a path on~$\ell + 2$ vertices by blowing up the first, the last, and the first~$a$ inner vertices into~$K_k$. More precisely, the vertex set of~$R$ is
	\[V(R) = Z_0 \cup \dots \cup Z_{\ell + 1}\]
	such that
	\[|Z_0| = |Z_{\ell + 1}| = k = |Z_1| = \dots = |Z_{a}| \quad \text{ and } \quad |Z_{a+1}| = \dots = |Z_{\ell}| = 1\,.\]
	The edges of~$R$ are such that~$Z_0, Z_1, \dots, Z_a$, and $Z_{\ell + 1}$ each induce a~$K_k$ and between any consecutive pair~$(Z_i, Z_{i+ 1})$, for~$i = 0, \dots, \ell$, all~$|Z_i||Z_{i+ 1}|$ edges are present. Note that we do not insist that the sets~$Z_i$ are pairwise disjoint. If the vertices in~$Z_0$ are those of~$\vec x$ and the vertices in~$Z_{\ell + 1}$ are those of~$\vec y$, then the rope is said to be a~$(\vec x, \vec y; k, \ell, a)$-rope and the sets~$(Z_1, \dots,Z_a)$ are called the \textit{inner parts} of the rope. 

	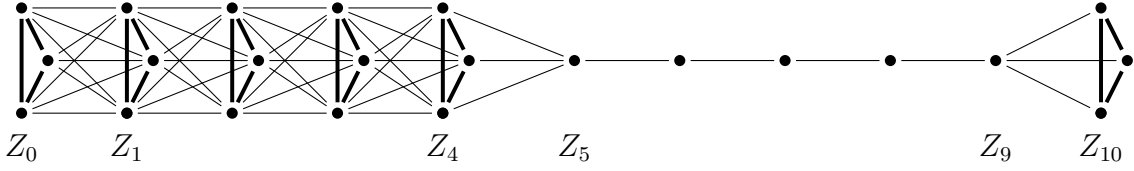
\begin{figure}[ht]
		\begin{tikzpicture}[scale = .7]
		
		\fill (0,1) circle (3pt) node[below = 4pt]{$Z_0$};
		\node (x1) at (0,1) {};
		\fill (0.5,2) circle (3pt);
		\node (x2) at (0.5,2) {};
		\fill (0,3) circle (3pt);
		\node (x3) at (0,3) {};
		\path[draw, line width =1.5pt] (x1) -- (x2);
		\path[draw, line width =1.5pt] (x2) -- (x3);
		\path[draw, line width =1.5pt] (x1) -- (x3);
		
		\foreach \x in {2, 4, 6, 8}
		{
			\node (x4) at (x1) {};
			\node (x5) at (x2) {};
			\node (x6) at (x3) {};
			
			\fill (\x,1) circle (3pt);
			\node (x1) at (\x,1) {};
			\fill (\x + 0.5,2) circle (3pt);
			\node (x2) at (\x + 0.5,2) {};
			\fill (\x,3) circle (3pt);
			\node (x3) at (\x,3) {};
			\path[draw, line width =1.5pt] (x1) -- (x2);
			\path[draw, line width =1.5pt] (x2) -- (x3);
			\path[draw, line width =1.5pt] (x3) --(x1);
			\foreach \y in {(x1), (x2), (x3)}{
				\path[draw] \y -- (x4);
				\path[draw] \y -- (x5);
				\path[draw] \y -- (x6);
			}
		} 
	
		\fill (10.5,2) circle (3pt);
		\node (x4) at (10.5,2) {};
		\path[draw] (x1) -- (x4);
		\path[draw] (x2) -- (x4);
		\path[draw] (x3) -- (x4);
		
		\foreach \x in {12.5, 14.5, 16.5, 18.5}
		{
			\node (x1) at (x4) {};
			\fill (\x,2) circle (3pt);
			\node (x4) at (\x,2) {};
			\path[draw] (x1) -- (x4);
		}
	
		\fill (20.5,1) circle (3pt);
		\node (x1) at (20.5,1) {};
		\fill (21,2) circle (3pt);
		\node (x2) at (21,2) {};
		\fill (20.5,3) circle (3pt);
		\node (x3) at (20.5,3) {};
		\path[draw, line width =1.5pt] (x1) -- (x2);
		\path[draw, line width =1.5pt] (x2) -- (x3);
		\path[draw, line width =1.5pt] (x1) -- (x3);
		\path[draw] (x4) -- (x1);
		\path[draw] (x4) -- (x2);
		\path[draw] (x4) -- (x3);
		
		\fill (2,1) circle (3pt) node[below = 4pt]{$Z_1$};
		\fill (8,1) circle (3pt) node[below = 4pt]{$Z_4$};
		\fill[white] (10.5,1) circle (3pt) node[black, below = 4pt]{$Z_5$};
		\fill[white] (18.5,1) circle (3pt) node[black, below = 4pt]{$Z_9$};
		\fill (20.5,1) circle (3pt) node[black, below = 4pt]{$Z_{10}$};
		
		\end{tikzpicture}
		
		\caption{A $(3, 9, 4)$-rope.}
	\end{figure}
	
	We shall prove the following assertion for fixed cliques~$\vec x$ and~$\vec y$.
	\begin{clm}\label{clm:ConnRope}
		There exists~$\ell \leq L + 2$ such that for every~$a = 0, \dots, \ell$ there are~$\xi_{a}n^{ak + (\ell - a)}$ $(\vec x, \vec y; k, \ell, a)$-\textit{ropes} in~$G$. 
	\end{clm}
	Note that, for~$a = 0$, Claim~\ref{clm:ConnRope} ensures many walks between~$N(\vec x)$ and~$N(\vec y)$, which indeed are provided by Lemma~\ref{lem:muinsep}.	
	For~$a = \ell$, it is easy to see that a~$(\vec x, \vec y; k, \ell, \ell)$-rope (without any vertex repetition) contains a~$(\vec x, \vec y; k)$-path with~$m = \ell  k$ inner vertices. 
	The number of~$(\vec x, \vec y; k, \ell, \ell)$-ropes with vertex repetitions is bounded by~$m^2n^{m - 1}$. Excluding these ropes from those obtained by Claim~\ref{clm:ConnRope} for~$a = \ell$ yields at least~$\xi_{\ell} n^m - m^2n^{m - 1} \geq \xi_{\ell} n^m /2 \geq \xi n^m$ $(\vec x, \vec y; k)$-paths, for sufficiently large~$n$.
	Thus, for~$a = \ell$, the claim leads to the conclusion of Lemma~\ref{lem:conn} and it is left to verify the claim.
	\end{proof}
	\begin{proof}[Proof of Claim \ref{clm:ConnRope}]
	First we fix the integer~$\ell$.
	Consider the neighbourhoods~$N(\vec x)$ and~$N(\vec y)$. 
	Since~$\vec x$ and~$\vec y$ are~$\zeta$-connectable, we have~$|N(\vec x)|$,~$|N(\vec y)| \geq \zeta n$.
	Since~$G$ is~$\mu$-inseparable, for each pair of distinct vertices~$(x, y) \in N(\vec x) \times N(\vec y)$, by Lemma~\ref{lem:muinsep}, there are at least~$cn^{\ell(x, y)}$ $(x, y)$-walks with~$\ell(x, y)$ inner vertices and~$0 \leq \ell(x, y) \leq L$. Hence, by the pigeonhole principle, there is some~$\ell$ with~$2 \leq \ell \leq L + 2$ such that there are at least~$\zeta^2 c n^{\ell} / (L+1) = \xi_0 n^{\ell}$ $(x_k, y_1)$-walks with exactly~$\ell$ inner vertices and such that the first and last inner vertex is from~$N(\vec x)$ and~$N(\vec y)$, respectively. 
	This yields the claim for~$a = 0$. 
	
	We proceed in an inductive manner. Let~$a \geq 0$ and assume by induction that~$G$ contains at least~$\xi_{a}n^{ak + (\ell - a)}$ $(\vec x, \vec y;k, \ell, a)$-ropes. For many of such ropes we now blow up the $1$-element part~$Z_{a+1}$. Consider collections~$\mathcal{Z} = (Z_1, \dots, Z_a, Z_{a+2}, \dots, Z_{\ell})$ and sets~$U_{\mathcal{Z}}$ such that~$u \in U_{\mathcal{Z}}$ if and only if
	\[(Z_1, \dots, Z_a, \{u\}, Z_{a+2}, \dots, Z_{\ell})\]
	are the inner parts of a~$(\vec x, \vec y;k, \ell, a)$-rope. By a standard averaging argument, it is easy to see that there exist~$\xi_a n^{ak + (\ell - a) - 1} / 2$ collections~$\mathcal{Z}$ such that the set~$U_{\mathcal{Z}}$ has size at least~$\xi_a n /2$.
	
	Note that each~$K_k$ contained in~$U_{\mathcal{Z}}$ yields a~$(\vec x, \vec y; k, \ell, a+1)$-rope. Lemma~\ref{lem:dense} applied to every set~$U_{\mathcal{Z}}$ of size at least~$\xi_a n /2$ leads to
	\begin{equation*}
		\frac{1}{k!} \left( d^{\binom{k}{2}} - (k-1)k\frac{\rho n^2}{|U_{\mathcal{Z}}|^2} \right)|U_{\mathcal{Z}}|^k 
		\overset{\eqref{eq:connConst2}}{\geq} 
		\frac{d^{\binom{k}{2}}}{2k!}\left(\frac{\xi_a}{2}\right)^kn^k
	\end{equation*}
	unordered copies of~$K_k$ in a given~$U_{\mathcal{Z}}$ of size at least~$\xi_a n /2$ .
	Hence, there are at least
	\begin{equation*}
		 \frac{\xi_{a}}{2}n^{ak + (\ell - a) -1} \cdot \frac{d^{\binom{k}{2}}}{2k!} \left(\frac{\xi_a }{2}\right)^{k} n^{k} \overset{\eqref{eq:connConst1}}{=} \xi_{a+1}n^{(a+ 1)k + (\ell - (a+ 1))}
	\end{equation*}
	$(\vec x, \vec y;k, \ell, a + 1)$-ropes in~$G$, which concludes the proof of Lemma~\ref{lem:conn}. 
	\end{proof}

\subsection{Absorbing Path Lemma}
\label{sec:apl}
For a given vertex $v$  a clique $K_{2k}$ contained in $N(v)$ can be used as an absorber for~$v$, in the sense that obviously 
the $K_{2k}$ induces a $k$-path on~$2k$ vertices. Moreover, placing $v$ in the middle of this $K_{2k}$ yields a $k$-path 
containing $v$ that starts and ends with the same $K_k$'s. Since inseparable and  uniformly dense graphs $G=(V,E)$ 
have minimum degree linear in the number of vertices, by Lemma~\ref{lem:dense}
the uniform density yields
many cliques $K_{2k}$ in the neighbourhood of any given vertex $v$. Moreover, for many of these $K_{2k}$ all the
$K_k$'s contained in it are connectable and, hence, together with the connecting lemma we can build an \emph{absorbing path} defined below.

\begin{dfn}
	For a graph~$G = (V, E)$ on~$n$ vertices, an integer~$k\geq 1$, and~$\alpha \geq 0$, we say that a~$(\vec x, \vec y; k)$-path~$P$ in~$G$ is~$\alpha$-\textit{absorbing} if for every set~$X \subseteq V \setminus V(P)$ of size~$|X| \leq \alpha n$, there is a~$(\vec x, \vec y; k)$-path~$Q$ in~$G$ with vertex set~$V(Q) = V(P) \cup X$.  
\end{dfn}

\begin{lemma}[Absorbing Path Lemma]\label{lem:abspath}
	For every~$d$,~$\mu \in (0,1]$, and integer~$k \geq 1$, there exist~$\rho$,~$\zeta$,~$\alpha > 0$, with~$\zeta \leq \mu/2$, and~$n_0$ such that every $(\rho, d)$-dense and $\mu$-inseparable graph~$G$ on~$n \geq n_0$ vertices contains an~$\alpha$-absorbing~$(\vec x, \vec y; k)$-path~$P_A$ of size~$|V(P_A)| \leq \zeta n/2$, and~$\vec x$,~$\vec y$ being $\zeta$-connectable. 
\end{lemma}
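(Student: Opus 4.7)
The plan is to follow the absorption scheme of R\"odl, Ruci\'nski and Szemer\'edi. For each vertex~$v \in V(G)$, I define an \emph{absorber} for $v$ to be an ordered $2k$-tuple $(w_1,\dots,w_{2k})$ of vertices in $N(v)$ spanning a $K_{2k}$ in $G$ such that both of the $K_k$'s $(w_1,\dots,w_k)$ and $(w_{k+1},\dots,w_{2k})$ are $\zeta$-connectable. The key property is that both $w_1\dots w_{2k}$ and the extended sequence $w_1\dots w_k \, v \, w_{k+1}\dots w_{2k}$ are $k$-paths with the same two end-$K_k$'s, since every window of $k{+}1$ consecutive vertices in the extended sequence induces a $K_{k+1}$ (the $w_i$'s lie in a $K_{2k}$ and $v$ is joined to all of them). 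Thus an absorber can be \emph{turned on} to swallow $v$ while preserving the two ends of whichever $k$-path it belongs to.

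Next I would verify that every vertex admits $\Omega(n^{2k})$ absorbers. Taking $X=\{v\}$ in the definition of $\mu$-inseparability yields $|N(v)|\geq \mu(n-1)$, and the inductive step in the proof of Lemma~\ref{lem:dense} shows that $G[N(v)]$ is itself $(\rho/\mu^2,d)$-dense. Applying Lemma~\ref{lem:dense} inside $N(v)$ with $2k$ in place of $k$ produces at least $\Omega_{d,\mu,k}(n^{2k})$ ordered $K_{2k}$'s in~$N(v)$. A standard double-count against Lemma~\ref{lem:dense} shows that only an $O(\zeta / d^{\binom{k+1}{2}})$-fraction of ordered $K_k$'s in~$G$ fail to be $\zeta$-connectable, so for $\zeta$ chosen sufficiently small almost all of these $K_{2k}$'s are bona fide absorbers.

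Third, sample each absorber independently with probability $p = \Theta(n^{1-2k})$. First-moment and Chernoff bounds yield, with positive probability, a family~$\cF$ with $|\cF|=O(n)$, only $O(1)$ intersecting pairs of absorbers, and $|\cF\cap\cA_v|\geq 2\alpha n$ for every~$v$, where $\cA_v$ is the set of absorbers for~$v$ and $\alpha=\alpha(d,\mu,k)>0$. Deleting one absorber from each intersecting pair gives a pairwise vertex-disjoint family~$\cF'$ with $|\cF'\cap\cA_v|\geq \alpha n$ for every~$v$. Then I would apply the Connecting Lemma~\ref{lem:conn} iteratively to string the absorbers of~$\cF'$ into one long $k$-path~$P_A$ by inserting short $k$-paths of length at most~$M$ between $\zeta$-connectable end-$K_k$'s. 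At each of the $O(n)$ connecting steps there are $\Omega(n^m)$ candidate connectors and fewer than $|V(P_A)|\cdot n^{m-1}$ meet the current partial path, so a disjoint connector always exists. Choosing $p$ sufficiently small keeps $|V(P_A)|\leq \zeta n/2$, and the two final ends can be chosen among the $\zeta$-connectable $K_k$'s produced by the connecting step.

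To verify the absorbing property, given any $X\subseteq V\sm V(P_A)$ with $|X|\leq \alpha n$, greedily assign each $v\in X$ to a distinct unused absorber in $\cF'\cap\cA_v$---possible since $|\cF'\cap\cA_v|\geq \alpha n\geq |X|$---and turn on each assigned absorber. The main obstacle is the consistent choice of constants: the Connecting Lemma needs $\rho$ small in terms of $\zeta$, the absorber count needs $\zeta$ small in terms of $d$, $\mu$ and $k$, and the random thinning must not push $|\cF'\cap\cA_v|$ below $\alpha n$. A hierarchy $\rho \ll \zeta \ll \alpha \ll d,\mu,1/k$, with the $\mu^{-O(k)}$ factor arising from the density of $G[N(v)]$ absorbed into the choice of $\rho$, makes all these constraints compatible.
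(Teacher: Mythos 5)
Your overall strategy coincides with the paper's: the same $K_{2k}\subseteq N(v)$ absorber gadget, a sparse random selection of absorbers followed by deletion of intersecting pairs, and repeated applications of the Connecting Lemma to string the survivors into one path. Two quantitative claims are off but repairable. First, with $p=\Theta(n^{1-2k})$ the expected number of intersecting pairs is $\Theta(n^{4k-1}p^2)=\Theta(n)$, not $O(1)$; all you need (and all you can get) is that this count is a sufficiently small multiple of $n$ compared with the per-vertex absorber count. Second, the assertion that only an $O\bigl(\zeta/d^{\binom{k+1}{2}}\bigr)$-fraction of ordered $K_k$'s fail to be $\zeta$-connectable does not follow from any double count: knowing that a non-connectable $K_k$ lies in few $K_{k+1}$'s bounds the number of $K_{k+1}$'s it spoils, not the number of such $K_k$'s. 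What you actually need --- that few ordered $K_{2k}$'s in $N(v)$ have a non-connectable half --- does follow, since each non-connectable $K_k$ extends to fewer than $\zeta n\cdot n^{k-1}$ ordered $K_{2k}$'s; the paper instead counts $K_{2k+1}$'s inside $N(v)$ and averages, which gives connectability of both halves for free.

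The genuine gap is in the connecting step. The Connecting Lemma yields $\xi n^{m}$ connectors where $\xi$ is produced \emph{after} the connectability parameter $\zeta$ is fixed, and in any reasonable quantification $\xi\ll\zeta$ (in the paper $\xi$ is an iterated power of $\zeta^{2}c/(L+1)$). Your forbidden set at each connecting step --- the current partial path together with the not-yet-inserted absorbers, which you must also avoid but do not mention --- has size $\Theta(\zeta n)$, so the number of connectors meeting it can be as large as $\zeta n\cdot M n^{m-1}$, which swamps $\xi n^{m}$. Hence ``a disjoint connector always exists'' does not follow from the subtraction you describe. Two fixes: either shrink $p$ so that the entire absorber family occupies only $O(\xi n/M)$ vertices (paying with a smaller $\alpha$), or do as the paper does and pass to the induced subgraph on the unused vertices, checking via Lemma~\ref{lem:muprime} that it remains $(\mu/2)$-inseparable and suitably dense and that the relevant ends remain $(\zeta/2)$-connectable there, and then apply the Connecting Lemma afresh inside that subgraph. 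Relatedly, your hierarchy $\rho\ll\zeta\ll\alpha$ cannot be correct: all absorbers lie on $P_A$ and $|V(P_A)|\leq\zeta n/2$, so necessarily $\alpha\leq\zeta/(4k)$; the right order is $\alpha\ll\zeta$, with $\rho$ and $\alpha$ fixed only after the Connecting Lemma has returned $\xi$ and $M$ for the chosen $\zeta$.
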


\begin{proof}
	Given~$d$,~$\mu$, and~$k$, we set
	\begin{equation}\label{eq:absConst}
		\zeta = \frac{d^{\binom{2k +1}{2}} \mu^{2k +1} }{2^{2k + 3}}\,.
	\end{equation}
	Applying Lemma~\ref{lem:conn} for~$d$,~$\mu / 2$, and~$\zeta / 2$ yields constants~$\rho'$,~$\xi$,~$M$, and~$n_0'$ and we fix
	\begin{equation}\label{eq:absConst2}
		\alpha =  \frac{\zeta^2}{24(10k^2 + M)}  \qquad \text{and} \qquad \rho = \min \left\{ \frac{\rho'}{4}, \frac{ d^{\binom{2k + 1}{2}} \mu^2 }{8 (2k + 1)^2 } \right\} \,,
	\end{equation}
	and let a sufficiently large $(\rho, d)$-dense and $\mu$-inseparable graph~$G = (V, E)$ on~$n$ vertices be given.
	
	For every vertex~$v \in V$, call an ordered~$K_{2k}$ contained in~$N(v)$ with ordered vertex 
	set~$(x_1, \dots, x_{2k})$ an $v$-\textit{absorber}, if~$\vec x_{v} = (x_1, \dots, x_k)$ and~$\vec y_v = (x_{k+1}, \dots, x_{2k})$ 
	are $\zeta$-connectable (ordered) $K_k$'s in $G$.
	Note that $(x_1, \dots, x_{2k})$ and~$(x_1, \dots, x_k, v, x_{k+1}, \dots, x_{2k})$ are both $(\vec x_v, \vec y_v; k)$-paths with $\zeta$-connectable ends. We denote by $\cA_v\subseteq V^{2k}$ the set of all $v$-absorbers and we let 
	\[
		\cA=\bigcup_{v\in V} \cA_v
	\]
	be the set of all absorbers in $G$.
	
	The~$\alpha$-absorbing~$(\vec x, \vec y; k)$-path~$P_A$ is constructed by considering 
	a collection~$A\subseteq \cA$ independently at random.
	We shall show that a.a.s.\ for every vertex~$v$ the collection~$A$ will contain 
	``many''~$v$-absorbers from $\cA_v$. After erasing intersecting absorbers, 
	we shall connect the remaining ones to a path~$P_A$ by repeated applications of Lemma~\ref{lem:conn}.
	
	First we prove the existence of ``many'' $v$-absorbers 
	for every~$v \in V$ in~$G$. Since~$G$ is $\mu$-inseparable, we have~$|N(v)| \geq \mu (n-1) \geq \mu n / 2$ for sufficiently large~$n$. Consequently, the induced subgraph~$G[N(v)]$ is $(4\rho / \mu^2, d)$-dense and Lemma~\ref{lem:dense} shows that the number of ordered~$K_{2k + 1}$ in~$G[N(v)]$ is at least
	\[
	\Big( d^{\binom{2k + 1}{2}} - (2k)(2k+1) \frac{4 \rho}{\mu^2} \Big) |N(v)|^{2k+ 1} \overset{\eqref{eq:absConst2}}{\geq} \frac{d^{\binom{2k + 1}{2}} }{2} \cdot \Big( \frac{\mu} {2} \Big)^{2k+1} \cdot n^{2k+1} \overset{\eqref{eq:absConst}}{=} 2\zeta n^{2k+1}\,.
	\]
	Since there are at most~$n^{2k}$ different~$K_{2k}$ and each~$K_{2k}$ is contained in at most~$n$ different~$K_{2k+1}$, by a simple averaging argument, there exist at least~$\zeta n^{2k}$ different~$K_{2k}$ each contained in~$\zeta n$ different~$K_{2k + 1}$ in~$G[N(v)]$. Consequently, $|\cA_v|\geq \zeta n^{2k}$ for every~$v \in V$.	
	
	Set
	\begin{equation}\label{eq:absProb}
		p = \frac {\zeta}{6(10k^2 + M) n^{2k -1}}\,.
	\end{equation}
	and consider a random collection~$A\subseteq \cA$, in which every ordered~$K_{2k}\in\cA$ is included independently with probability~$p$.
	Let~$X_v$ be the random variable $|A\cap\cA_v|$. Then,
	\begin{align*}
	\EE X_v \geq \zeta n^{2k}p \overset{\eqref{eq:absProb} }{=} \frac{\zeta^2 n}{6(10k^2 + M)}\,.
	\end{align*}
	
	Since~$X_v$ is binomially distributed, by the union bound and Chernoff's inequality (see, e.g., \cite{JLR}*{Theorem 2.1}), we have
	\begin{align}\label{eq:absAbs}
	\PP  \Big( \exists v \colon X_v \leq \frac{\zeta^2 n}{12(10k^2 + M)} \Big) &\leq n \cdot \max_{v \in V} \PP \bigg( X_v \leq \frac{\zeta^2 n}{12(10k^2 + M)} \bigg) \nonumber \\
	& \leq n \cdot \exp \bigg( -\frac{\zeta^2 n}{48(10k^2 + M)} \bigg) < \frac{1}{3}\,, 
	\end{align}
	for sufficiently large~$n$.
	
	Consider now the pairs of absorbers in~$A$ that share some vertex. Let~$Y$ be the random variable that counts the number of such intersecting pairs.
	There are at most~$5k^2n^{4k - 1}$ possible intersecting pairs in $\cA$ and, hence,~$\EE Y < 5k^2n^{4k - 1} p^2$. Markov's inequality yields
	\begin{equation}\label{eq:absInter}
	\PP \Big(Y \geq \frac{\zeta^2 n}{24(10k^2 + M)} \Big) \overset{\eqref{eq:absProb} }{\leq} \PP (Y \geq 15 k^2 n^{4k - 1} p^2) \leq \frac{ \EE Y}{15 k^2 n^{4k - 1} p^2} < \frac{1}{3}\,.
	\end{equation}
	
	For the size of~$A$ we note that~$\EE |A| < n^{2k}p$ and another application of Markov's inequality shows
	\begin{equation}\label{eq:absSize}
		\PP \Big(|A| \geq \frac{\zeta n}{2(10k^2 + M)} \Big) \overset{\eqref{eq:absProb}}{=} \PP (|A| \geq 3n^{2k}p) \leq \frac{\EE |A|}{3 n^{2k}p} < \frac{1}{3}\,.
	\end{equation}
	
	Thus, by \eqref{eq:absAbs}, \eqref{eq:absInter}, and \eqref{eq:absSize}, there exists a collection~$A_0\subseteq \cA$ satisfying
	\begin{enumerate}[label=(\roman*)]
	\item $|A_0\cap\cA_v|\geq \zeta^2 n / (12(10k^2 + M)$ for every~$v \in V$,
	\label{prop:absAbs}
	\item there are at most~$\zeta^2 n / (24(10k^2 + M))$ pairs of intersecting absorbers in~$A_0$, 
	\label{prop:absInter}
	\item the size~$|A_0|$ is at most~$\zeta n / (2(10k^2 + M))$. \label{prop:absSize}
	\end{enumerate}
	From each pair of intersecting absorbers, delete one of them in an arbitrary way and let~$A_1 = \{ \hat{K}_1, \dots, \hat{K}_{m} \} \subseteq A_0$ be the set of absorbers in~$A_0$ obtained this way. It follows from properties~\ref{prop:absAbs} and~\ref{prop:absInter}
	that 
	\[
		\big|A_1\cap \cA_v\big|
		\geq \frac{\zeta^2 n}{24(10k^2 + M)}
		= \alpha n
	\]
	for every~$v \in V$, i.e., $A_1$ contains at least $\alpha n$ $v$-absorber for every vertex $v\in V$. 
	
	In the final step we connect the absorbers in~$A_1$ to a $k$-path~$P_A$. We construct~$P_A$ inductively by repeated applications of Lemma~\ref{lem:conn}. 
	Suppose we already obtained a path~$P^i$ that contains~$\hat{K}_1, \dots, \hat{K}_i$ and 
	\begin{align*}
		|V(P^i)| \leq i \cdot 2k + (i - 1)M\,,
	\end{align*}
	below we establish the existence of~$P^{i+1}$ that in addition contains~$\hat{K}_{i+1}$ and satisfies  
	\begin{align*}
		|V(P^{i+1})| \leq (i + 1)\cdot 2k + i \cdot M\,.
	\end{align*}
	For that, set
	\[ Z^i = V(P^i) \cup \bigcup_{j = i+1}^{m} V(\hat{K}_j)
	\]
	and observe that for every~$i \leq m$,
	\begin{align}\label{eq:absPSize}
	|Z^i| &\leq i \cdot 2k + (i - 1)M + (m - i) \cdot 2k < m \cdot (2k + M)  \nonumber \\
	&\overset{\ref{prop:absSize}}{\leq} \frac{\zeta n}{2(10k^2 + M)} (2k + M) \leq \frac{\zeta n}{2} \overset{\eqref{eq:absConst}}{\leq} \frac{\mu n}{4}\,.
	\end{align}
	
	Let~$\vec x$ be the last~$k$ vertices of~$\hat{K}_i$ and let~$\vec y$ be the first~$k$ vertices of~$\hat{K}_{i+1}$.
	In view of the choice of constants in~\eqref{eq:absConst2}, the observation~\eqref{eq:absPSize}, and Lemma~\ref{lem:muprime}, the induced subgraph~$G' = G[(V\setminus Z^i) \cup V(\vec x) \cup V(\vec y)]$ is $(\rho', d)$-dense and $(\mu / 2)$-inseparable, and~$\vec x$ and~$\vec y$ are~$\zeta/2$-connectable in~$G'$. Consequently, there is an~$(\vec x,\vec y;k)$-path in~$G'$ with at most~$M$ inner vertices outside~$Z^i$. Together with~$P^i$ this yields~$P^{i+1}$ with the desired properties.
	
	Since~$P^m$ contains at least $\alpha n$ distinct $v$-absorbers for every $v\in V$, it is an~$\alpha$-absorbing path. Moreover, the first~$k$ vertices in~$P^m$ are from~$\hat{K}_1$ and the last~$k$ vertices are from~$\hat{K}_{m}$, which by definition are $\zeta$-connectable cliques in~$G$, which shows that~$P_A = P^m$ has the desired properties. 
\end{proof}

\subsection{Proof of the main result}\label{sec:pf-main}
Having established the Connecting Lemma (Lemma~\ref{lem:conn}), the Absorbing Path Lemma (Lemma~\ref{lem:abspath}), and the Path Lemma (Corollary~\ref{cor:pathcover}), we are ready to deduce Theorem~\ref{thm:main}.

\begin{proof}[Proof of Theorem \ref{thm:main}]
	The proof of Theorem~\ref{thm:main} is based on the absorption method and we start by fixing all involved constants.
	Given~$d$,~$\mu$, and~$k$, applying Lemma~\ref{lem:abspath} (Absorbing Path Lemma) yields constants~$\rhoa$,~$\zetaa$,~$\alphaa$, and an application of Corollary~\ref{cor:pathcover} (Path Lemma) yields~$\rhop$ and~$\zetap$. For an application of Lemma~\ref{lem:conn} (Connecting Lemma), set
	\begin{equation}\label{eq:mainZeta}
	\zetac = \min \Big\{ \frac{\zetaa}{2}, \frac{\alphaa \zetap }{2}\Big\}\,,
	\end{equation}
	and apply the Connecting Lemma with~$\mu /2$,~$d$,~$\zetac$, and~$k$ to attain constants~$\rhoc$,~$\xic$, and~$\mc$. Finally, set
	\begin{equation}\label{eq:mainRho}
	\rho = \min \Big\{ \rhoa, \frac{\alphaa^2 \rhop}{4}, \frac{\rhoc}{4}\Big\}\,,
	\end{equation}
	and let~$n$ be sufficiently large. In particular, we may assume  that
	\begin{equation}\label{eq:mainN}
		\frac{2}{\alphaa \zetap} \mc^2 < \frac{\xic}{4} \Big( \frac{\alphaa}{8}\Big)^{\mc} n\,.
	\end{equation}
	
	Let~$G = (V, E)$ be a $(\rho, d)$-dense and $\mu$-inseparable graph on~$n$ vertices.
	By the Absorbing Path Lemma, there is an~$\alphaa$-absorbing $(\vec x_A, \vec y_A; k)$-path~$P_A$ contained in~$G$ with~$|V(P_A)| \leq \zetaa n/2$ and~$\vec x_A$,~$ \vec y_A$ being~$\zetaa$-connectable in~$G$. 
	This path will be set aside and with it, another special set of vertices which we call the reservoir. On the remaining graph, we shall find an almost perfect covering of its vertices by~$i_0$ disjoint~$(\vec x_i, \vec y_i; k)$-paths with~$\vec x_i$,~$\vec y_i$ being~$\zetac$-connectable.
	
	 The reservoir~$R \subseteq V$ will be used to connect the absorbing path and the paths in the almost perfect covering to attain an almost spanning cycle. 
	 For that it is convenient to choose the set~$R$ in such a way that for any connectable 
	 $\vec x$ and $\vec y$  there are still ``many'' $(\vec x, \vec y; k)$-paths having all their inner vertices in~$R$.
	 In order to define the reservoir, consider the induced subgraph 
	\[G' = G[(V \setminus V(P_A) ) \cup V(\vec x_A) \cup V(\vec y_A)] \,.\]
	Since the number of deleted vertices is~$|V(P_A)| - 2k \leq \zetaa n/2 \leq \mu n/4$, Lemma~\ref{lem:muprime} 
	shows that~$G'$ is $\mu/2$-inseparable and, since $\mu n/4<n/2$, it follows from the $(\rho,d)$-denseness of $G$ 
	that~$G'$ is also~$(4 \rho, d)$-dense. Moreover, if~$\vec x$ is a clique in~$G'$ that is~$\zetaa$-connectable in~$G$, then it is~$\zetaa/2$-connectable in~$G'$. By our choice of constants in~\eqref{eq:mainZeta} and~\eqref{eq:mainRho}, it follows from the Connecting Lemma that if~$\vec x$ and~$\vec y$ are two disjoint $\zetac$-connectable cliques in~$G'$, then there are at least~$\xic(n/2)^m$ distinct $(\vec x, \vec y;k)$-paths with~$m$ inner vertices in~$G'$, for some~$m = m(\vec x, \vec y) \leq \mc$.
	
	We wish the reservoir to be disjoint from~$P_A$, thus consider the induced subgraph $G - V(P_A) = G' - (V(\vec x_A) \cup V(\vec y_A))$. The number of $(\vec x, \vec y;k)$-paths in~$G'$ intersecting~$\vec x_A$ or~$\vec y_A$ is at most~$2k \cdot m \cdot (n/2)^{m-1}$, implying that for~$n$ sufficiently large, there are at least
	\begin{equation}\label{eq:mainPaths}
		\xic \Big( \frac{n}{2} \Big)^m - 2k \cdot m \cdot \Big( \frac{n}{2} \Big)^{m-1} \geq \frac{\xic}{2}\Big( \frac{n}{2} \Big)^{m(\vec x, \vec y)}\,,
	\end{equation}
	$(\vec x, \vec y;k)$-paths with inner vertices in~$G - V(P_A)$. Note that this is true for any two disjoint $\zetac$-connectable cliques in~$G'$, which in particular allows~$\vec x_A$ or~$\vec y_A$ to be one of them. 
	
	For the reservoir~$R$ we choose vertices from~$V\setminus V(P_A)$ independently with probability
	\begin{equation}
		p = \frac{\alphaa}{4}\,.
	\end{equation}
	The desired properties for the reservoir are  
	\begin{enumerate}[label=(\roman*)]
		\item $|R| \leq \alphaa n /2$ and \label{prop:mainR}
		\item for any two disjoint~$\zetac$-connectable cliques~$\vec x$ and~$\vec y$ in~$G'$, there are at least
		\[\frac{\xic}{4} \Big(\frac{\alphaa}{4}\Big)^m \Big(\frac{n}{2}\Big)^m \,,  \]
		$(\vec x, \vec y;k)$-paths with all their~$m = m(\vec x, \vec y)$ inner vertices in~$R$. \label{prop:mainR2}
	\end{enumerate}
	For property~\ref{prop:mainR} we observe that Markov's inequality implies
	\[ \PP \Big( |R| \geq  \frac{\alphaa n}{2} \Big) \leq \frac{ 2 \EE |R|} { \alphaa n} =  \frac{n - |V(P_A)|} { 2 n} < \frac{1}{2}\,.\]
	For property~\ref{prop:mainR2}, let~$\vec x$ and $\vec y$ be two disjoint~$\zetac$-connectable cliques in~$G'$ and define~$X$ to be the random variable that counts the number of $(\vec x, \vec y;k)$-paths with all their~$m = m(\vec x, \vec y)$ inner vertices in~$R$. Note that the inclusion or exclusion of a vertex in~$R$ affects~$X$ by at most~$m\cdot n^{m - 1}$ and that
	\[\EE X \overset{\eqref{eq:mainPaths}}{\geq} \frac{\xic}{2} \Big( \frac{n}{2} \Big)^m \Big( \frac{\alphaa}{4} \Big)^m\,. \]
	Consequently, the Azuma-Hoeffding inequality (see, e.g., \cite{JLR}*{Corollary~2.27}) asserts that 
	\begin{align}\label{eq:mainAzuma}
	\PP  \Big( X \leq \frac{\xic}{4} \Big(\frac{\alphaa}{4}\Big)^m \Big(\frac{n}{2}\Big)^m \Big) &
	 \leq \exp \Big( - \frac{ ( \xic /4)^2 ( \alphaa / 4)^{2m} ( n / 2)^{2m}  } {2 n \cdot m^2 n^{2m - 2}} \Big)  \nonumber\\
	& \leq \exp \Big(  -  \frac{\xic^2}{32}  \cdot \frac{ \alphaa^{2\mc} }{8^{2\mc} \mc^2 } \cdot n \Big)\,.
	\end{align}
	Since there are at most~$n^{2k}$ pairs~$(\vec x,\vec y)$, the union bound and~\eqref{eq:mainAzuma} show that the probability of~$R$ not having property~\ref{prop:mainR2} is less than~$1/2$ for sufficiently large~$n$. Hence, there is some set~$R$ satisfying both~\ref{prop:mainR} and~\ref{prop:mainR2}. 
	
	Set aside~$P_A$ and~$R$ and cover the vertices of~$G - (V(P_A) \cup V(R))$ by disjoint $k$-paths until at most~$\alphaa n /2$ vertices are left uncovered. 
	We obtain these~$k$-paths by repeated applications of the Path Lemma. 
	Assume we already have~$\{P_1, \dots, P_{i}\}$, where for~$j = 1, \dots, i$ the $k$-path~$P_j$ is a~$(\vec x_j, \vec y_j; k)$-path with~$\vec x_j$,~$\vec y_j$ being~$\zetac$-connectable in~$G'$ and
	\begin{equation}\label{eq:mainSizePj}
			|V(P_j)| \geq \frac{\alphaa \zetap }{2} n.
	\end{equation}
	Let~$L = V - V(P_A) - V(R) - \bigcup_{j = 1}^i V(P_j)$ be the set of vertices not yet covered and suppose that~$|L| \geq \alphaa n /2$. 
	Hence, the subgraph~$G[L]$ is $(4\rho/ \alphaa^2, d)$-dense. By the choice in~\eqref{eq:mainRho} and the Path Lemma, there is a~$(\vec x_{i+1}, \vec y_{i+1}; k)$-path~$P_{i+1}$ in~$G[L]$ with
	\[|V(P_{i+1})| \geq \zetap |L| \geq \frac{\alphaa \zetap }{2} n\,,\]
	and~$\vec x_{i+1}$,~$\vec y_{i+1}$ being $\zetap$-connectable in~$G[L]$. 
	By the choice in~\eqref{eq:mainZeta}, we have
	\[\zetap |L| \geq \zetac n \geq \zetac |V(G')|\,,\]
	and hence,~$\vec x_{i+1}$,~$\vec y_{i+1}$ are $\zetac$-connectable in~$G'$.
	Therefore, we may enlarge the partial~$k$-path covering until the set of leftover vertices has size at most
	\begin{equation}\label{eq:mainL}
	|L| < \frac{\alphaa}{2} n\,.
	\end{equation}
	Let~$\{P_1, \dots, P_{i_0} \}$ be such a family of~$k$-paths. Note that~\eqref{eq:mainSizePj} yields
	\begin{equation}\label{eq:maini0}
	 i_0 < \frac{n}{\alphaa \zetap/2 \cdot n} = \frac{2}{\alphaa \zetap}\,.
	\end{equation}
	
	The next step is to connect the $k$-paths in~$\{ P_A = P_0, P_1, \dots, P_{i_0} \}$ using the reserved vertices in~$R$ to obtain the $k$-th power of an almost spanning cycle.
	Assume we already obtained for some~$0 \leq j \leq i_0$ a~$(\vec x_A, \vec y_j;k)$-path~$Q^j$ containing the paths~$P_A, P_1, \dots, P_j$ and such that
	\begin{equation}\label{eq:mainUsedR}
	|V(Q^j) \setminus (V(P_A) \cup \dots \cup V(P_{j}))| = |V(Q^j) \cap R| \leq j \cdot \mc\,.
	\end{equation}
	 If~$j < i_0$, we will connect~$Q^j$ to $P_{j+1}$ and obtain the~$(\vec x_A, \vec y_{j+1};k)$-path~$Q^{j+1}$ that in addition contains~$P_{j + 1}$ and satisfies~$|V(Q^{j+1}) \cap R| \leq (j + 1) \cdot \mc$.
	 For~$j = i_0$, we have that~$Q^{i_0}$ is a~$(\vec x_A, \vec y_{i_0};k)$-path including all~$\{ P_A, P_1, \dots, P_{i_0} \}$ and we will connect~$\vec y_{i_0}$ to~$\vec x_{i_0 + 1} = \vec x_A$, obtaining the~$k$-th power of a cycle.
	 
	 By property~\ref{prop:mainR2} of the reservoir, since~$\vec y_j$ and~$\vec x_{j+1}$ are~$\zetac$-connectable cliques in~$G'$, there are at least
	 \[
	 \frac{\xic}{4} \Big(\frac{\alphaa}{4}\Big)^m \Big(\frac{n}{2}\Big)^m \,,
	 \]
	 $(\vec y_{j}, \vec x_{j+1};k)$-paths with all its~$m = m(\vec y_{j}, \vec x_{j+1})$ inner vertices in~$R$.
	 We need to ensure that one of these $k$-paths is disjoint from~$Q^j$. 
	 By~\eqref{eq:mainUsedR}, the number of~$k$-paths with~$m$ inner vertices that intersect~$Q^j$ is at most
	 \[
	 j \cdot \mc \cdot m \cdot n^{m-1} \overset{\eqref{eq:maini0}}{<} \frac{2}{\alphaa \zetap} \cdot \mc^2 \cdot n^{m-1} \overset{\eqref{eq:mainN}}{\leq} \frac{\xic}{4} \Big(\frac{\alphaa}{8}\Big)^{\mc} n^m \,.
	 \] 
	 Hence, there is a $(\vec y_{j}, \vec x_{j+1};k)$-path with all its~$m \leq \mc$ inner vertices in~$R$ that is disjoint from~$Q^i$. If~$j < i_0$, this~$(\vec y_{j}, \vec x_{j+1};k)$-path can be used to build~$Q^{j+1}$, which satisfies that
	 \[|V(Q^{j+1}) \cap R| \leq j \cdot \mc + \mc = (j+1)\cdot \mc\,.\]
	 If~$j = i_0$, use this~$(\vec y_{i_0}, \vec x_{i_0+1};k)$-path to close the~$k$-th power of an almost spanning cycle~$H'$. 
	 
	 The vertices from~$G$ which are not in~$H'$ are those from~$L$, which were not covered by the almost perfect~$k$-path covering, plus the vertices in~$R$ that were not used to connect the paths in~$\{ P_A, P_1, \dots, P_{i_0} \}$. Hence, 
	 \[
	 |V \setminus V(H')| \leq |L| + |R| \overset{ \eqref{eq:mainL}}{\leq} \frac{\alphaa}{2} n + \frac{\alphaa}{2} n\,.
	 \]
	 Since~$P_A$ is a segment of~$H'$ and~$P_A$ is~$\alphaa$-absorbing, we may replace~$P_A$ by  a~$(\vec x_A, \vec y_A; k)$-path with vertex set~$V(P_A) \cup (V \setminus V(H'))$ and obtain the desired~$k$-th power of a Hamiltonian cycle in~$G$.
\end{proof}

\section{Embedding graphs of small bandwidth}
\label{sec:bandwidth}
In this section we sketch the proof of Theorem~\ref{thm:main2}, which is based on the regularity 
method for graphs and on Theorem~\ref{thm:main}. There are quite a few examples of results 
obtained by the regularity method, which are based on reductions to simpler or seemingly weaker results that 
are applied to the reduced graph of the regular partition obtained by an application of 
Szemer\'edi's regularity lemma~\cite{Sz78} (see, e.g.,~\cite{KS96}*{Sections~2, 4-6} and~\cite{Ko99}).
In particular, the proofs of the bandwidth theorems in~\cites{BST09,staden2018} were based on reductions to the corresponding theorems for 
powers of Hamiltonian cycles. We will follow the same route and start the discussion by recalling this approach.
For the discussion below we assume the reader to be familiar with the regularity method for graphs and the \emph{blow-up lemma}
from~\cite{blowup97}.

\subsection{Sketch of the proof of Theorem~\ref{thm:main2}}
Given a bounded degree graph $H=(V_H,E_H)$ of small linear bandwidth 
and a uniformly dense and inseparable graph $G=(V_G,E_G)$, we 
apply Szemer\'edi's regularity lemma~\cite{Sz78} and obtain a regular partition $V_0\dcup V_1\dcup\dots\dcup V_t=V_G$ and a reduced graph~$R=([t],E_R)$.
Moreover, by a careful choice of the involved constants we may ensure that $R$ satisfies essentially the same 
assumptions as~$G$, i.e., we ensure that~$R$ itself is again 
uniformly dense and inseparable. Consequently, we can appeal to Theorem~\ref{thm:main} and
obtain a $k$-th power of a Hamiltonian cycle~$C$ in the reduced graph~$R$ for any fixed (not yet specified) $k\in\NN$. 
Moreover, without loss of generality we may assume that the (cyclic) ordering of~$V(C)$ matches the one from $[t]$ and that $k+1$ divides $t$.

Next we prepare for an application of the blow-up lemma of Koml\'os, S\'ark\"ozy, and Szemer\'edi~\cite{blowup97}.
For that we fix a $K_{k+1}$-factor $F\subseteq C$ and for the price of moving some vertices from every $V_i$ for $i\in[t]$ to 
the exceptional class $V_0$ of the regular partition, we can ensure that those pairs covered by cliques from~$F$ become super-regular.
Let $V'_0\dcup V'_1\dcup\dots\dcup V'_t=V_G$ be the regular partition after this step.
In the next step we want to:
\begin{enumerate}[label={($\star$)}]
\item\label{it:A} redistribute the vertices of~$V'_0$ to the other vertex classes without compromising the 
	super-regularity of the pairs covered by~$F$.
\end{enumerate}
Resolving~\ref{it:A} presents the main challenge in the proof of Theorem~\ref{thm:main2} and we 
discuss this in Section~\ref{sec:except}. For now let $V''_1\dcup\dots\dcup V''_t=V_G$ be the resulting vertex partition of $G$ after the redistribution.

After preparing the graph $G$ we want to find a ``matching'' 
partition $W_1\dcup\dots\dcup W_t=V_H$ of the vertex set of $H$ into independent sets.
In particular, we would like to achieve that~$|W_i|=|V''_i|$ for every $i\in[t]$ and that applying the blow-up lemma for every 
clique in~$F$ yields a partial embedding of pieces of $H$ induced on the corresponding vertex classes into~$G$. However, in addition 
we have to make sure that these partial embeddings lead to a full embedding $H\hookrightarrow G$. For that we have to ensure 
that the necessary adjacencies between the pieces of the partial embeddings are preserved. 
Owing to the small bandwidth of~$H$ only few vertices of every piece may have neighbours in the next piece, 
where the ordering of the pieces reflects the bandwidth ordering of $V_H$. This allows us to embed these vertices first 
(using the greedy embedding strategy from~\cite{CRST83}) and then utilise the so-called \emph{image restrictions} from the blow-up lemma.
For the greedy embedding of those connections, we need that the involved pairs are covered by~$C$. 

Summarising the discussion above, we
want to find a partition $W_1\dcup\dots\dcup W_t$ of $V_H$ satisfying
\begin{enumerate}[label=\rmlabel]
\item\label{it:i} $W_i$ is independent in $H$ for every $i\in [t]$,
\item\label{it:ii} $|W_i|=|V''_i|$ for every $i\in [t]$, and
\item\label{it:iii} $\phi\colon V_H\to [t]$, defined by $\phi^{-1}(i)=W_i$, is a graph homomorphism from~$H$ to~$C$.
\end{enumerate}
The maximum degree restriction $\Delta(H)\leq \Delta$ implies $\chi(H)\leq \Delta+1$. Moreover, the Hajnal-Szemer\'edi theorem~\cite{hajnal1970}
yields for every $U\subseteq V_H$ an equitable $(\Delta+1)$-colouring of~$H[U]$.
Hence, we may simply split $V_H$ into $t/(\Delta+1)$ 
intervals (according to the bandwidth order) of roughly the same size and then fix an equitable $(\Delta+1)$-colouring of each interval. This yields 
the desired partition $W_1\dcup\dots\dcup W_t$ of $V_H$, which obviously satisfies~\ref{it:i}. 

Fixing $k\geq \Delta$ implies that $C$ contains a $K_{\Delta+1}$-factor $F$ and this 
may allow us to apply the blow-up lemma for the partial embeddings of these intervals. For that we also 
need to address~\ref{it:ii}. In the process of preparing the graph $G$, we may have moved a few vertices leading to slightly 
different sizes of the vertex classes $V''_1,\dots,V''_t$. However, using the super-regularity of the pairs covered by cliques 
from~$F$ and the uniform density of~$G$ (applied to $G[V''_i]$) tells us that we can move a few vertices between classes covered by the same 
clique~$K$ from~$F$ to balance those classes to restore~\ref{it:ii}, without affecting the super-regularity of the pairs covered by~$K$.

For part~\ref{it:iii} it will be convenient that all pairs between two consecutive $K_{\Delta+1}$'s from~$F$ are regular.
In fact, this would even allow the small alterations for balancing the classes corresponding to one 
$K_{\Delta+1}$ mentioned above, without compromising the regularity. We can achieve this by simply choosing $k=2\Delta+1$ and having 
$C$ to be a $(2\Delta+1)$-st power of a Hamiltonian cycle in~$R$ and fixing a $K_{\Delta+1}$-factor $F\subseteq C$.

This concludes the outline of the proof of Theorem~\ref{thm:main2}. It remains to address~\ref{it:A} (see Section~\ref{sec:except}), while the other steps 
of the proof can be obtained by adaptations of the proofs from~\cites{BST09,staden2018} and the details can be found 
in~\cite{Ebsen}.

\subsection{Redistributing the vertices of the exceptional class}
\label{sec:except}
For addressing the problem raised in~\ref{it:A} it will be convenient to consider the following strengthening of Theorem~\ref{thm:main}.
\begin{thm}\label{thm:main1b}
	For every~$d$,~$\mu \in  (0,1]$, and~$k \in \NN$ there exist~$\rho$, $\alpha$, $\gamma > 0$ and~$t_0$ such that the following holds.
	Suppose $R=([t],E_R)$ is a $(\rho, d)$-dense and $\mu$-inseparable graph on~$t \geq t_0$ vertices 
	and subsets $U_1,\dots,U_m\subseteq [t]$ each of size at least $\mu t$ for some $m\leq 2^{\gamma t}$ are given.
	
	Then~$R$ contains the~$k$-th power of a Hamiltonian cycle $C$ with the additional property that for every $i\in [m]$
	there are at least $\alpha t$ cliques $K_{k}$ contained in~$C[U_i]$.
\end{thm}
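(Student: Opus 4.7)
The plan is to reuse the absorbing-method skeleton of Theorem~\ref{thm:main}, modifying only the construction of the absorbing path so that it already carries, for every $i\in[m]$, a linear number of $K_k$'s inside $U_i$. Once these $K_k$'s are planted as fixed blocks of $P_A$, they will survive the remaining construction and appear as $K_k$'s of $C[U_i]$.

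Concretely, I would enrich the random selection step in the proof of Lemma~\ref{lem:abspath} as follows. In addition to the pool $\cA=\bigcup_v \cA_v$ of $v$-absorbers (ordered $K_{2k}$'s in $N(v)$ with $\zetaa$-connectable halves), introduce for each $i\in[m]$ a pool $\cB_i$ of \emph{$U_i$-markers}: ordered $K_{2k}$'s lying entirely in $U_i$ whose two halves are $\zetaa$-connectable in~$R$. Since $|U_i|\geq \mu t$, the induced subgraph $R[U_i]$ is $(\rho/\mu^2, d)$-dense, so Lemma~\ref{lem:dense} yields $|\cB_i|\geq c\,t^{2k}$ after discarding the lower-order number of $K_{2k}$'s failing the connectability condition. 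We then select every $K_{2k}$ in $\cA\cup\bigcup_i \cB_i$ independently with probability $p=\Theta(t^{-(2k-1)})$, precisely as in Lemma~\ref{lem:abspath}, and concatenate the surviving objects (after removing intersecting pairs) into a single $k$-path $P_A$ by iterated applications of the Connecting Lemma. Treating $\cA$ and $\bigcup_i\cB_i$ as formally disjoint ensures that markers are never activated for vertex absorption, so their $K_{2k}$ blocks persist unchanged in the final cycle~$C$; since each surviving marker in $\cB_i$ contributes $k+1$ consecutive $K_k$'s inside $U_i$, the required $\alpha t$ $K_k$'s are then in place.

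The remainder of the argument---choosing the reservoir, obtaining an almost-perfect $k$-path cover from Corollary~\ref{cor:pathcover}, connecting all pieces via Lemma~\ref{lem:conn}, and finally absorbing the leftover vertices into $P_A$---is carried out verbatim as in the proof of Theorem~\ref{thm:main}. The main obstacle, and the only genuinely new technical point, is the union bound over $m\leq 2^{\gamma t}$ events. It is handled by Chernoff's inequality: each of the events $\{|A\cap\cB_i|\geq 2\alpha t\}$ has expectation of order~$t$ and therefore fails with probability $e^{-c_0 t}$ for some constant $c_0>0$; choosing $\gamma<c_0/(2\ln 2)$ makes the union bound over these $m$ events, combined with the standard vertex-absorber events of the proof of Lemma~\ref{lem:abspath}, succeed with positive probability. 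Verifying that the Bernoulli selections across $\cA$ and all $\cB_i$ are genuinely independent, so that Chernoff applies uniformly, is the delicate book-keeping step.
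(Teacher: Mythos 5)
Your proposal follows essentially the same route as the paper's own (sketched) proof of Theorem~\ref{thm:main1b}: there, too, the only modification is to the Absorbing Path Lemma (Lemma~\ref{lem:abspath}), planting $\Omega(t)$ connectable cliques from each $U_i$ into the random collection that is concatenated into $P_A$, controlling all $m\le 2^{\gamma t}$ sets simultaneously via Chernoff's inequality and a union bound, and noting that these planted cliques are never touched by the absorption step. The only cosmetic difference is that you plant $K_{2k}$-markers while the paper speaks of connectable $K_k$'s; both survive into $C[U_i]$ as required.
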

\begin{proof}[Proof of Theorem~\ref{thm:main1b} (Sketch)]
	There is only one difference between Theorems~\ref{thm:main1b} and~\ref{thm:main}.
	It concerns the additionally given vertex subsets $U_1,\dots,U_m\subseteq V=V(R)$ of size~$\Omega(|V|)$ for which we 
	require that the guaranteed~$k$-th power of a Hamiltonian cycle shares $\Omega(|V|)$ many~$K_{k}$ with each of these sets.
	
	This additional restriction can be achieved by adjusting the  proof of the Absorbing Path Lemma (Lemma~\ref{lem:abspath}).
	Since each of the sets $U_i$ has linear size, Lemma~\ref{lem:dense} yields~$\Omega(|V|^{k+1})$ cliques 
	$K_{k+1}$ in $U_i$. Consequently, by a standard averaging argument at least $\Omega(|V|^{k})$ $K_k$'s
	are $\zeta$-connectable for some sufficiently small $\zeta=\zeta(d,\mu)>0$. Hence, following the proof of the 
	Absorbing Path Lemma (Lemma~\ref{lem:abspath}) we may consider a random collection of $K_{k}$'s of size $\Omega(|V|)$ and need to ensure
	that with positive probability it contains~$\Omega(|V|)$ such $K_{k}$'s for every given set $U_i$. For a fixed set~$U_i$ this holds with 
	probability at least~$1-\exp(-\Omega(|V|))$ and by the union bound this can be achieved for all sets $U_1,\dots,U_m$ simultaneously 
	as long as $m=2^{o(|V|)}$, which is enforced by the assumption of Theorem~\ref{thm:main1b}. The rest of the proof of Lemma~\ref{lem:abspath}
	would be analogous. As a result the absorbing path $P_A$ is not only $\alpha$-absorbing, but also induces
	$\Omega(|V|)$ cliques~$K_{k}$ within every given set $U_1,\dots,U_m$. Finally, we remark that none of these $K_{k}$ would be 
	altered during the absorption process, since they can be chosen disjoint from the $K_{2k}$ used for the absorption property. 
	Therefore, the guaranteed~$k$-th power of a Hamiltonian cycle~$C$ has the desired property.
\end{proof}

Below we explain how Theorem~\ref{thm:main1b} can be used to address~\ref{it:A} in the proof of Theorem~\ref{thm:main2}.
In the proof of Theorem~\ref{thm:main2} we want to find a $(2\Delta+1)$-st power of a Hamiltonian cycle~$C$ in the reduced 
graph~$R$ and we fix a $K_{\Delta+1}$-factor $F\subseteq C$, where each clique consists of  consecutive vertices from~$C$. 
Note that we could easily
redistribute the vertices of $V'_0$, if we would know that for every vertex $v\in V_0$ there exists a clique $K$ in $F$ such that  
$|N_G(v)\cap V'_j|=\Omega(|V'_j|)$ for all $j\in V(K)$. In fact, in such a case we could simply add~$v$ to any of the classes $V'_j$
corresponding to a vertex of~$K$ without affecting the super-regularity of the pairs covered by~$K$. Moreover, if  for every 
vertex $v\in V'_0$ there would be at least 
$\alpha t$ such cliques $K$, then we could redistribute all vertices of $V'_0$
in a fairly balanced way, i.e., by enlarging the sizes of the classes $V'_i$ for every $i\in [t]$ 
by at most $\frac{|V'_0|}{\alpha t}$ vertices and this would resolve the problem raised in~\ref{it:A}.

For that we would like to apply Theorem~\ref{thm:main1b} to the reduced graph~$R$ with additional sets 
\[
	U_v=\big\{j\in[t]\colon |N_G(v)\cap V_j|=\Omega(|V_j|)\big\}\,.
\]
On the one hand, the $\mu$-inseparability of $G$ would imply that $|U_v|\geq \mu' t$ for some $\mu'>0$. On the other hand,
since the vertex set $V'_0$ depends on $F\subseteq C$ and we may move some vertices to $V_0$ to establish the super-regularity of the pairs covered by~$F$, we would need to consider~$U_v$ for every $v\in V_G$ and such a na\"\i ve approach would require 
$m=n\gg2^{\gamma t}$.

We may consider vertices $v$, $w\in V_G$ to be equivalent ($v\sim w$),
if $U_v=U_w$, which allows us to reduce $m$ to $2^{\mu'' t}$ for some
$\mu''>0$ depending on $\mu$. However, since the parameter $\gamma>0$ in Theorem~\ref{thm:main1b} depends on $\mu$, 
such a basic improvement
on the number of sets would not suffice for an application of Theorem~\ref{thm:main1b}. 

We resolve this issue by considering a random refinement of $V_1\dcup\dots\dcup V_t$, where each class~$V_i$ is split randomly into $s$ sets 
$V_{i,1}\dcup\dots\dcup V_{i,s}=V_i$. Let $S$ be the corresponding reduced graph of the refined partition.
The local characterisation of regularity through the number of four-cycles combined with the Azuma-Hoeffding inequality 
tells us that every subpair $(V_{i,a},V_{j,b})$ will inherit the regularity from $(V_i,V_j)$ as long as $s=o((n/\log n)^{1/8})$.
Hence, we may consider such a random refinement for some $s\gg1/\gamma$ and the resulting reduced graph $S$ would be simply an $s$-blow-up 
of~$R$. Consequently, the reduced graph~$S$ is also inseparable and uniformly dense.

Moreover, the randomness can be further utilised to show that the 
equivalence relation~$\sim$ would not be affected. More precisely, Chernoff's inequality implies that if $U_v=U_w$ for the reduced graph $R$, then $U^S_v=U_w^S$ for the sets 
\[
	U^S_v=\big\{(j,\sigma)\in[t]\times [s]\colon |N_G(v)\cap V_{j,\sigma}|=\Omega(|V_{j,\sigma}|)\big\}\,.
\]
Consequently, we can apply Theorem~\ref{thm:main1b} (with $k=2\Delta+1$) for the reduced graph~$S$ on $st$ vertices 
with 
\[
	m\leq 2^{\mu'' t}\leq 2^{\gamma st}\,.
\] 
As a result, for every vertex $v\in V$ there are at least 
$\alpha st$ $K_{2\Delta +1}$ contained in~$C[U^S_v]$ and, hence, $F[U^S_v]$
contains at least $\alpha st$ different $K_{\Delta+1}$. This way we can redistribute the vertices of $V'_0$ as discussed above 
and this addresses~\ref{it:A} in the proof of Theorem~\ref{thm:main2}.

This concludes the discussion of the proof of Theorem~\ref{thm:main2}. For the technical details of this approach we refer 
to~\cite{Ebsen}.

\section{Concluding remarks}
\label{sec:remarks}

	Finding a common generalisation of the approximate version of Theorem~\ref{thm:KSSz}
	and of Theorem~\ref{thm:main} is an interesting open problem. In other words, we want to 
	weaken the assumptions of Theorem~\ref{thm:main} in such a way that, 
	on the one hand, for every $\eps>0$, every sufficiently large $n$-vertex graph with minimum degree 
	$(\frac{k}{k+1}+\eps)n$ would satisfy them and, on the other hand,
	they still ensure the existence of a $k$-th power of a Hamiltonian cycle. 
	
In fact, for $k = 1$, 
$\mu$-inseparability alone yields an appropriate Connecting Lemma (see Lemma~\ref{lem:muinsep}).
Moreover,  a combination of some ideas from~\cite{PR} and~\cite{R3S2} shows that it supplies  
sufficiently strong absorbers for such an approach. 
However, considering $n$-vertex graphs that are the complement of a clique on $(1-\mu)n$ vertices, shows that $\mu$-inseparability alone does not suffice for the existence of Hamiltonian cycles.

The missing part for this problem is an appropriate relaxation of the $(\rho,d)$-denseness assumption that still guarantees an almost perfect path cover. This is rendered by the following property, which is closely related to the notion of \emph{robust (out)expanders}
utilised by K\"uhn, Osthus and their collaborators in their work on Hamiltonian cycles (and decompositions thereof) in (directed) graphs
(see, e.g.,~\cite{KO-icm}*{Section 3.4} and the references therein).

\begin{dfn}\label{def:robustMatchable}
For $\rho > 0$, $d\in[0,1]$   a graph $G=(V,E)$ is \emph{$(\rho, d)$-robustly matchable}, if for all subsets $U \subseteq V$ we have
\[
e(U) \geq d\frac{|U|^{2}}{2} - \rho n^{2},
\]
or
\[
 |U| \leq \frac{n}{2} + \rho n \text{\quad and \quad} \big| \big\{ v \in V \setminus U\colon |N(v) \cap U| \geq d|U| - \rho n \big\} \big| \geq |U| - \rho n\,.
\]
\end{dfn}
Obviously every $(\rho, d)$-dense graph is $(\rho, d)$-robustly matchable and 
it is not hard to show that for $\rho\ll d$, such graphs contain matchings that cover almost all vertices.
In fact, the same can be inferred for the reduced graph after a suitable application of Szemer\'edi's regularity lemma.
The regular pairs in the reduced graph covered by such a matching contain a collection of relatively few disjoint paths that cover 
the vertices of the regular pair. In other words, one can show that $(\rho, d)$-robustly matchable graphs admit an 
almost perfect path cover that can be used for an absorption based proof.

The discussion above outlines an approach based on the absorption method that shows that
for every $d>0$, there exist $\rho$ and $\mu>0$ such that every sufficiently large 
$(\rho, d)$-robustly matchable and $\mu$-inseparable graph is Hamiltonian. The details of such a proof 
appear in~\cite{Ebsen}. In this way, we obtain strengthening of Theorem~\ref{thm:main} for $k=1$. 
Moreover, it is straightforward to check that for every $\eps>0$, every sufficiently large graph~$G=(V,E)$ 
with $\delta(G)\geq (\frac{1}{2} +\eps)|V|$ is $(\rho, d)$-robustly matchable and $\mu$-inseparable for appropriate 
parameters~$d$, $\rho$, and~$\mu$. In other words, 
this yields a common generalisation of the approximate version of Dirac's theorem (Theorem~\ref{thm:KSSz} for $k=1$)
and Theorem~\ref{thm:main} for~$k=1$. 

For powers of Hamiltonian cycles, 
we remark that $\mu$-inseparability does not suffice to ensure a Connecting Lemma nor the 
existence of appropriate absorbers. For example, random bipartite graphs of 
edge density $2\mu+o(1)$ are $\mu$-inseparable, while a Connecting Lemma and absorbers 
for $k$-th powers of Hamiltonian cycles must contain cliques of order~$k+1$. 
However, a recursive definition of inseparability within vertex neighbourhoods seems to be 
sufficient in this context. Combining this with a notion that robustly ensures 
almost perfect $K_{k+1}$-factor's might lead to a common generalisation of the approximate version of Theorem~\ref{thm:KSSz} 
and Theorem~\ref{thm:main} for arbitrary $k\geq 1$ and we intend to come back to this in the near future.

\begin{bibdiv}
\begin{biblist}

\bib{BST09}{article}{
   author={B\"{o}ttcher, Julia},
   author={Schacht, Mathias},
   author={Taraz, Anusch},
   title={Proof of the bandwidth conjecture of Bollob\'{a}s and Koml\'{o}s},
   journal={Math. Ann.},
   volume={343},
   date={2009},
   number={1},
   pages={175--205},
   issn={0025-5831},
   review={\MR{2448444}},
   doi={10.1007/s00208-008-0268-6},
}

\bib{CRST83}{article}{
   author={Chvat\'{a}l, V.},
   author={R\"{o}dl, V.},
   author={Szemer\'{e}di, E.},
   author={Trotter, W. T., Jr.},
   title={The Ramsey number of a graph with bounded maximum degree},
   journal={J. Combin. Theory Ser. B},
   volume={34},
   date={1983},
   number={3},
   pages={239--243},
   issn={0095-8956},
   review={\MR{714447}},
   doi={10.1016/0095-8956(83)90037-0},
}

\bib{corradi1963}{article}{
   author={Corr\'adi, K.},
   author={Hajnal, A.},
   title={On the maximal number of independent circuits in a graph},
   journal={Acta Math. Acad. Sci. Hungar.},
   volume={14},
   date={1963},
   pages={423--439},
   issn={0001-5954},
   review={\MR{0200185}},
   doi={10.1007/BF01895727},
}
	
\bib{dirac1952}{article}{
   author={Dirac, G. A.},
   title={Some theorems on abstract graphs},
   journal={Proc. London Math. Soc. (3)},
   volume={2},
   date={1952},
   pages={69--81},
   issn={0024-6115},
   review={\MR{0047308}},
   doi={10.1112/plms/s3-2.1.69},
}

\bib{Ebsen}{thesis}{
	author={Ebsen, O.},
	type={Ph.D. Thesis},
	organization={Fachbereich Mathematik, Universit\"at Hamburg},
	note={Expected 2019/20},
}

\bib{E1964}{article}{
   author={Erd\H os, Paul},
   title={Problem 9},
   conference={
      title={Theory of Graphs and its Applications (Proc. Sympos. Smolenice,
      1963)},
   },
   book={
      publisher={Publ. House Czechoslovak Acad. Sci., Prague},
   },
   date={1964},
   pages={85--90},
   review={\MR{0179778}},
}

\bib{hajnal1970}{article}{
   author={Hajnal, A.},
   author={Szemer\'{e}di, E.},
   title={Proof of a conjecture of P. Erd\H{o}s},
   conference={
      title={Combinatorial theory and its applications, II},
      address={Proc. Colloq., Balatonf\"{u}red},
      date={1969},
   },
   book={
      publisher={North-Holland, Amsterdam},
   },
   date={1970},
   pages={601--623},
   review={\MR{0297607}},
}

\bib{JLR}{book}{
   author={Janson, Svante},
   author={\L uczak, Tomasz},
   author={Ruci{\'n}ski, Andrzej},
   title={Random graphs},
   series={Wiley-Interscience Series in Discrete Mathematics and
   Optimization},
   publisher={Wiley-Interscience, New York},
   date={2000},
   pages={xii+333},
   isbn={0-471-17541-2},
   review={\MR{1782847}},
   doi={10.1002/9781118032718},
}

\bib{KO-icm}{article}{
   author={K\"{u}hn, Daniela},
   author={Osthus, Deryk},
   title={Hamilton cycles in graphs and hypergraphs: an extremal
   perspective},
   conference={
      title={Proceedings of the International Congress of
      Mathematicians---Seoul 2014. Vol. IV},
   },
   book={
      publisher={Kyung Moon Sa, Seoul},
   },
   date={2014},
   pages={381--406},
   review={\MR{3727617}},
}

\bib{Ko99}{article}{
   author={Koml\'{o}s, J\'{a}nos},
   title={The blow-up lemma},
   note={Recent trends in combinatorics (M\'{a}trah\'{a}za, 1995)},
   journal={Combin. Probab. Comput.},
   volume={8},
   date={1999},
   number={1-2},
   pages={161--176},
   issn={0963-5483},
   review={\MR{1684627}},
   doi={10.1017/S0963548398003502},
}

\bib{blowup97}{article}{
   author={Koml\'{o}s, J\'{a}nos},
   author={S\'{a}rk\"{o}zy, G\'{a}bor N.},
   author={Szemer\'{e}di, Endre},
   title={Blow-up lemma},
   journal={Combinatorica},
   volume={17},
   date={1997},
   number={1},
   pages={109--123},
   issn={0209-9683},
   review={\MR{1466579}},
   doi={10.1007/BF01196135},
}

\bib{komlos1998}{article}{
   author={Koml\'{o}s, J\'{a}nos},
   author={S\'{a}rk\"{o}zy, G\'{a}bor N.},
   author={Szemer\'{e}di, Endre},
   title={Proof of the Seymour conjecture for large graphs},
   journal={Ann. Comb.},
   volume={2},
   date={1998},
   number={1},
   pages={43--60},
   issn={0218-0006},
   review={\MR{1682919}},
   doi={10.1007/BF01626028},
}

\bib{KS96}{article}{
   author={Koml\'{o}s, J.},
   author={Simonovits, M.},
   title={Szemer\'{e}di's regularity lemma and its applications in graph theory},
   conference={
      title={Combinatorics, Paul Erd\H{o}s is eighty, Vol. 2},
      address={Keszthely},
      date={1993},
   },
   book={
      series={Bolyai Soc. Math. Stud.},
      volume={2},
      publisher={J\'{a}nos Bolyai Math. Soc., Budapest},
   },
   date={1996},
   pages={295--352},
   review={\MR{1395865}},
}

\bib{PR}{misc}{
	author={Polcyn, J.},
	author={Reiher, Chr.},
	note={Personal communication},
	date={2018},
}

\bib{R3S2}{article}{
   author={Reiher, Chr.},
   author={R\"{o}dl, Vojt\v{e}ch},
   author={Ruci\'{n}ski, Andrzej},
   author={Schacht, Mathias},
   author={Szemer\'{e}di, Endre},
   title={Minimum vertex degree condition for tight Hamiltonian cycles in
   3-uniform hypergraphs},
   journal={Proc. Lond. Math. Soc. (3)},
   volume={119},
   date={2019},
   number={2},
   pages={409--439},
   issn={0024-6115},
   review={\MR{3959049}},
   doi={10.1112/plms.12235},
}

\bib{reiher2016}{article}{
	author={Reiher, Chr.},
	author={Schacht, M.},
	title={Clique factors in locally dense graphs},
	pages={691--693},
	journal={Random Structures Algorithms},
	volume={49},
	number={4},
	date={2016}
	note={Appendix to \emph{Triangle factors of graphs without large independent sets
		and of weighted graphs} by J. Balogh, Th. Molla, M. Sharifzadeh, ibid.},
}

\bib{RRSz06}{article}{
   author={R{\"o}dl, Vojt{\v{e}}ch},
   author={Ruci{\'n}ski, Andrzej},
   author={Szemer{\'e}di, Endre},
   title={A Dirac-type theorem for 3-uniform hypergraphs},
   journal={Combin. Probab. Comput.},
   volume={15},
   date={2006},
   number={1-2},
   pages={229--251},
   issn={0963-5483},
   review={\MR{2195584 (2006j:05144)}},
   doi={10.1017/S0963548305007042},
}

\bib{seymour1973}{article}{
author={Seymour, Paul D.},
title={Problem Section, Problem 3},
conference={
title={Combinatorics},
address={Proc. British Combinatorial Conf., Univ. Coll. Wales,
Aberystwyth},
date={1973},
},
book={
publisher={Cambridge Univ. Press, London},
},
date={1974},
pages={201--202. London Math. Soc. Lecture Note Ser., No. 13},
review={\MR{0345829}},
}

\bib{staden2018}{article}{
	author={Staden, K.},
	author={Treglown, A.},
	title={The bandwidth theorem for locally dense graphs},
	eprint={1807.09668},
	note={Submitted},
}

\bib{Sz78}{article}{
   author={Szemer\'{e}di, Endre},
   title={Regular partitions of graphs},
   language={English, with French summary},
   conference={
      title={Probl\`emes combinatoires et th\'{e}orie des graphes},
      address={Colloq. Internat. CNRS, Univ. Orsay, Orsay},
      date={1976},
   },
   book={
      series={Colloq. Internat. CNRS},
      volume={260},
      publisher={CNRS, Paris},
   },
   date={1978},
   pages={399--401},
   review={\MR{540024}},
}

\end{biblist}
\end{bibdiv}

\end{document}